\documentclass[12pt, 14paper,reqno]{amsart}
\vsize=21.1truecm
\hsize=15.2truecm
\vskip.1in
\usepackage{amsmath,amsfonts,amssymb}

\theoremstyle{plain}
\newtheorem{theorem}{Theorem}
\newtheorem{lemma}{Lemma}
\newtheorem{remark}{Remark}

\theoremstyle{proof}
\theoremstyle{definition}

\numberwithin{equation}{section}
\numberwithin{lemma}{section}
\numberwithin{theorem}{section}

\usepackage{amsmath}
\usepackage{amsfonts}   
\usepackage{amssymb}
\usepackage{amssymb, amsmath, amsthm}
\usepackage[breaklinks]{hyperref}
\theoremstyle{thmrm}

\usepackage{graphicx}
\begin{document}
\title[Real quadratic fields with class number up to three]{A note on certain real quadratic fields with class number up to three}
\author{Kalyan Chakraborty, Azizul Hoque and Mohit Mishra}
\address{Kalyan Chakraborty @Kalyan Chakraborty, Harish-Chandra Research Institute, HBNI, Chhatnag Road, Jhunsi, Allahabad 211 019, India.}
\email{kalyan@hri.res.in}
\address{Azizul Hoque @Azizul Hoque, Harish-Chandra Research Institute, HBNI, Chhatnag Road, Jhunsi, Allahabad 211 019, India.}
\email{azizulhoque@hri.res.in}
\address{Mohit Mishra @Mohit Mishra, Harish-Chandra Research Institute, HBNI,
Chhatnag Road, Jhunsi,  Allahabad 211 019, India.}
\email{mohitmishra@hri.res.in}

\keywords{R-D type real quadratic field, Class number, Zeta values.}
\subjclass[2010] {Primary: 11R29, 11R42, Secondary: 11R11}
\maketitle

\begin{abstract} We obtain criteria for the class number of certain Richaud-Degert type real quadratic fields to be $3$. We also treat a couple of families of real quadratic fields of Richaud-Degert type that were not considered earlier, and  obtain similar criteria for the class number of such fields to be $2$ and $3$. 
\end{abstract}

\section{Introduction}
The size of the class group of an algebraic number field is one of the fundamental problems in algebraic number theory. Gauss conjectured that there are exactly nine imaginary quadratic fields with class number $1$. This conjecture was proved independently by Baker \cite{BA66} and Stark \cite{ST67}. However, Heegner had already proved this conjecture in \cite{HE52}. Unfortunately, his
proof was regarded as incorrect or at the best, incomplete. Stark found that the gap in the proof is very minor and he had completed the same in \cite{ST69}. In fact, Gauss gave a list of imaginary quadratic fields with given very low class numbers, and he believed them to be complete. The list of imaginary quadratic fields with class number $2$ was completely classified by Baker and Stark  independently in \cite{BA71} and \cite{ST71} respectively, and jointly in \cite{BS71}. The analogous list of imaginary quadratic fields with class number $3$ was computed by Oesterl\'{e} in \cite{OS88}. Finally, M. Watkins \cite{WA04} classified all the imaginary quadratic fields with class numbers up to $100$.     

On the other hand, very little is known about the class number of real quadratic fields. In 1801, Gauss conjectured the following:
\begin{itemize}
\item[(G1)]There exist infinitely many real quadratic fields of class number
$1$, or more precisely
\item[(G2)] There exist infinitely many real quadratic fields of the form $\mathbb{Q}(\sqrt{p}),~ p\equiv 1\pmod4$ of class number $1$.
\end{itemize}
This conjecture is yet to be resolved. It seems that one of the most essential difficulties of this problem comes from deep connection of the class number with the fundamental unit. In connection to (G2), Chowla and Friedlander \cite{CF76} posted the following conjecture:
\begin{itemize}
\item[(CF)]\label{CF} If $D=m^2+1$ is a prime with $m>26$, then the class number of $\mathbb{Q}(\sqrt{D})$ is greater than $1$.
\end{itemize}
This conjecture says that there are exactly $7$ real quadratic fields of the form $\mathbb{Q}(\sqrt{m^2+1})$ with class number $1$, and they correspond to $m\in\{1, 2, 4, 6, 10, 14, 24\}$. In 1988, Mollin and Williams \cite{MW88}  proved this conjecture under the generalized Riemann hypothesis. Chowla also posted a conjecture analogous to (CF) on a general family of real quadratic fields. More precisely, he conjectured the following:   
\begin{itemize}
\item[(C)] Let $D$ be a square-free rational integer of the
form $D=4m^2+1$ for some positive integer $m$. Then there exist exactly $6$ real
quadratic fields $\mathbb{Q}(\sqrt{D})$ of class number one, viz.
$D\in \{5, 17, 37, 101, 197, 677\}$.
\end{itemize}
Yokoi \cite{YO86} studied this conjecture and he posted one more conjecture on another family of real quadratic fields. More precisely, he posted the following conjecture:
\begin{itemize}
\item[(Y)] Let $D$ be a square-free rational integer of the form
$D=m^2+4$ for some positive integer $m$. Then there exists exactly $6$ real quadratic fields $\mathbb{Q}(\sqrt{D})$ of class number one, viz.
$D\in\{5,13, 29, 53, 173, 293\}$.
\end{itemize}
Kim, Leu and Ono \cite{KLO87} proved that at least one of (C) and (Y) is true, and that there are at most $7$ real quadratic fields $\mathbb{Q}(\sqrt{D})$ of class number $1$ for the other case. The conjectures (C) and (Y) were proved by Bir\'{o} in \cite{B03, BI03}. Hoque and Saikia \cite{HS16} proved that there  do not exist any real quadratic fields of the form $\mathbb{Q}(\sqrt{9(8n^2+r)+2})$, where $n\geq 1$ and $r=5, 7$, with class number $1$. In \cite{CH17}, the authors proved that there are no real quadratic fields $\mathbb{Q}(\sqrt{d})$ of class number $1$ when $d=n^2p^2+1$ with $p\equiv \pm 1\pmod 8$ a prime and $n$ an odd integer. Recently, Hoque and Chakraborty \cite{CH18} proved that if $d$ is a square-free part of $an^2+2$, where $a=9, 196$ and $n$ is an odd integer, then the class number of $\mathbb{Q}(\sqrt{d})$ is greater than $1$.    
It is more interesting to find necessary and sufficient conditions that a real quadratic field has a given fixed class number $g$. Yokoi \cite{YO86} proved using algebraic method that for a positive integer $m$, the class number of $\mathbb{Q}(\sqrt{4m^2+1})$ is $1$ if and only if $m^2-t(t+1)$ is a prime for every $1\leq t\leq m-1$. Lu obtained this result in \cite{LU81} using the theory of continued fractions. Kobayashi \cite{KO90} obtained stronger conditions that this as well as some other families of real quadratic fields to be of class number $1$. In \cite{BK1}, Byeon and Kim established certain necessary and sufficient conditions for the class number of real quadratic fields of Richaud-Degert type to be $1$. They obtained in \cite{BK2} these conditions by comparing the special zeta values attached to a real quadratic field determined by two different ways of computation.       
Analogously, they also obtained some necessary and sufficient conditions for the class number of the real quadratic fields of Richaud-Degert type to be $2$. Mollin \cite{MO91} also obtained some analogous conditions for class number to be $2$ using the theory of continued fractions and algebraic arguments.  

In this paper, we consider all real quadratic fields of narrow Richaud-Degert type with two exceptions. More precisely, we consider the real quadratic fields $k=\mathbb{Q}(\sqrt{d})$, where $d=n^2+ r$ and $|r|\in\{1,4\}$ with the exceptions when $n^2-1\equiv 3\pmod 4$ and $n^2-4\equiv 5\pmod 8$. We also consider wide Richaud-Degert type real quadratic fields $\mathbb{Q}(\sqrt{d})$, where $d=n^2+r$ and $r\not\in\{1,4\}$ with $d\equiv 1\pmod 8$. We obtain some criteria for the class number of these fields to be $3$. We also obtain similar criteria for $\mathbb{Q}(\sqrt{n^2+r}), ~r\in\{1, 4\}$ to have class number $2$ which were not covered in \cite{BK2} by Byeon and Kim. We largely follow the method of Byeon and Kim \cite{BK1, BK2}. 

\section{Values of Dedekind zeta function}
In this section, we discuss two different ways of computing special values of zeta functions attached to a real quadratic field that are due to Siegel and Lang. Let $k$ be a real quadratic field, and $\zeta_{k}(s)$ be the Dedekind zeta function of $k$. By specializing Siegel's formula \cite{SI69} for $\zeta_k(1-2n)$ for general $k$, Zagier \cite{ZA76} described this formula by direct analytic methods when $k$ is a real quadratic field. For $n=1$, it takes the following form (see \S3 in \cite{ZA76}).
\begin{theorem}[Zagier \cite{ZA76}]\label{thm2.1}
Let $k$ be a real quadratic field with discriminant $D$. Then 
$$\zeta_k(-1)=\frac{1}{60}\sum_{\substack{ |t|<\sqrt{D}\\ t^2\equiv D\pmod 4}}\sigma\left(\frac{D-t^2}{4}\right),$$
where $\sigma(n)$ denotes the sum of divisors of $n$.\
\end{theorem}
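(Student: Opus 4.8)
The plan is to first strip $\zeta_k(-1)$ of its analytic content by the factorization of the Dedekind zeta function, and then to recognize the remaining finite quantity as a Fourier coefficient of an Eisenstein series. First I would use
$$\zeta_k(s)=\zeta(s)\,L(s,\chi_D),$$
where $\chi_D=\left(\tfrac{D}{\cdot}\right)$ is the Kronecker symbol of the fundamental discriminant $D$. Substituting $s=-1$, the classical value $\zeta(-1)=-\tfrac{1}{12}$ together with the expression of the $L$-value through a generalized Bernoulli number, $L(-1,\chi_D)=-\tfrac12 B_{2,\chi_D}$, collapses the left-hand side to the purely arithmetic quantity $\zeta_k(-1)=\tfrac{1}{24}B_{2,\chi_D}$. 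The theorem is then equivalent to the explicit evaluation
$$\tfrac{1}{24}B_{2,\chi_D}=\frac{1}{60}\sum_{\substack{|t|<\sqrt{D}\\ t^2\equiv D\,(4)}}\sigma\!\left(\frac{D-t^2}{4}\right),$$
in which the Dedekind zeta function no longer appears; this identity is the real content.

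To attack it I would pass to generating functions. Writing $q=e^{2\pi i\tau}$ and $\theta(\tau)=\sum_{t\in\mathbb{Z}}q^{t^2}$, the product
$$\theta(\tau)\sum_{m\geq1}\sigma(m)\,q^{4m}=\sum_{N}\Bigg(\sum_{\substack{|t|<\sqrt{N}\\ t^2\equiv N\,(4)}}\sigma\!\left(\frac{N-t^2}{4}\right)\Bigg)q^{N}$$
has, as its $N$-th coefficient, exactly the divisor sum in the theorem, now with $N$ ranging over all integers. The weight here is $\tfrac12+2=\tfrac52$, which places us in the setting of half-integral-weight Eisenstein series. I would therefore compare this coefficient, for $N=D$ a fundamental discriminant, with the corresponding coefficient of Cohen's weight-$\tfrac52$ Eisenstein series, whose $D$-th Fourier coefficient is precisely $L(-1,\chi_D)$. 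Matching the two in the (finite-dimensional) relevant space of forms on $\Gamma_0(4)$, and normalizing against one explicit value such as $D=5$ (where the sum equals $2$ and $\zeta_k(-1)=\tfrac{1}{30}$), would pin down the constant $\tfrac{1}{60}$ and yield the identity.

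The main obstacle is that $\theta(\tau)\sum_{m\ge1}\sigma(m)q^{4m}$ is \emph{not} itself a genuine modular form: since $\sum_{m\ge1}\sigma(m)q^{m}=\tfrac{1}{24}\bigl(1-E_2(\tau)\bigr)$ and $E_2$ is only quasimodular, the product is a quasimodular object of weight $\tfrac52$. Consequently the coefficients $\sum_{|t|<\sqrt N}\sigma(\tfrac{N-t^2}{4})$ do not agree with Cohen's coefficients for every $N$ (already $N=1$ exhibits a discrepancy), and the clean identity is restricted to fundamental discriminants. The crux of the proof is therefore to control this anomaly: either by replacing $E_2$ with its non-holomorphic completion $E_2^{*}(\tau)=E_2(\tau)-\tfrac{3}{\pi\,\mathrm{Im}\,\tau}$ and tracking the extra term, or, following Zagier's direct route in \S3 of \cite{ZA76}, by evaluating the partial zeta function $\zeta(s,\mathcal{C})$ of each narrow ideal class $\mathcal{C}$ at $s=-1$ through the reduction theory (continued fractions) of the associated binary quadratic forms and summing over the class group. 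Either way, once the non-modular correction is isolated and shown to vanish at fundamental discriminants, reading off the $q^{D}$-coefficient completes the argument.
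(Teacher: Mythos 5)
The paper does not actually prove this statement: it is quoted from Zagier \cite{ZA76}, where it arises by specializing Siegel's formula, and the classical argument behind that citation is to restrict the weight-$(2,2)$ Hecke--Eisenstein series of the Hilbert modular group of $k$ to the diagonal, obtaining an element of the one-dimensional space $M_4(SL_2(\mathbb{Z}))=\mathbb{C}E_4$ whose constant term is (a multiple of) $\zeta_k(-1)$ and whose first coefficient is the divisor sum; Zagier's \S3 alternative is a direct analytic computation. Your route --- factoring $\zeta_k(s)=\zeta(s)L(s,\chi_D)$, reducing to $\zeta_k(-1)=\tfrac{1}{24}B_{2,\chi_D}$, and then comparing the $q^D$-coefficient of $\theta(\tau)\sum_{m\ge1}\sigma(m)q^{4m}$ with Cohen's weight-$\tfrac52$ Eisenstein series $\mathcal{H}_2=\sum_N H(2,N)q^N$ (with $H(2,D)=L(-1,\chi_D)$ for fundamental $D$) --- is a genuinely different and viable path, essentially Cohen's; it trades Hilbert modular forms for half-integral weight forms on $\Gamma_0(4)$, and your numerical check at $D=5$ is consistent ($\zeta_k(-1)=-\tfrac{1}{12}H(2,D)$ forces the $D$-th coefficient of the theta product to equal $-5H(2,D)$, i.e.\ $60\zeta_k(-1)$). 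You also correctly identify the real obstacle, the quasimodularity of $E_2$, and the standard remedy via $E_2^*$ and holomorphic projection; the needed fact, which you assert but should verify, is that the discrepancy term $\tfrac{3}{4\pi y}\theta(\tau)$ has Fourier support only on perfect-square indices $N=t^2$, so its holomorphic projection cannot disturb the coefficient at a non-square fundamental discriminant $D$.

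The concrete gap is in your matching step. The space $M_{5/2}(\Gamma_0(4))$ has dimension $2$, containing both $\mathcal{H}_2$ and $\theta^5$, and the fundamental-discriminant coefficients $r_5(D)$ of $\theta^5$ do not vanish; so ``normalizing against one explicit value such as $D=5$'' does not determine the projection and would leave an undetermined multiple of $r_5(D)$ in your formula. You must either match two coefficients and show the $\theta^5$-component is zero, or (cleaner) observe that $\theta(\tau)\sum_{m\ge1}\sigma(m)q^{4m}$ has Fourier support on $N=t^2+4m\equiv 0,1\pmod 4$ and hence its holomorphic completion lies in the Kohnen plus space $M^{+}_{5/2}(\Gamma_0(4))\cong M_4(SL_2(\mathbb{Z}))$, which is one-dimensional and spanned by $\mathcal{H}_2$ (note $\theta^5$ is excluded since $r_5(2)\neq 0$); only then does a single normalization pin down the constant $\tfrac{1}{60}$. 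With that repair the argument closes, and the plus-space isomorphism onto $M_4(SL_2(\mathbb{Z}))$ makes visible why your proof and Siegel's diagonal restriction are ultimately the same one-dimensionality statement in disguise. Your closing fallback to ``Zagier's direct route in \S3'' is not a proof but a deferral to the very reference the paper cites.
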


Another method of computing special values of $\zeta_k(s)$ is due to Lang whenever $k$ is a real quadratic field. 
Let $k=\mathbb{Q}{(\sqrt{d})}$ be a real quadratic field with discriminant $D$, and let $\mathfrak{A}$ be an ideal class in $k$. Let $\mathfrak{a}$ be an integral ideal in $\mathfrak{A}^{-1}$ with an integral basis $\{r_{1},r_{2}\}$. We define $$\delta(\mathfrak{a})= r_1r_2'-r_1'r_2,$$
where $r_1'$ and  $r_2'$ are the conjugates of $r_1$ and $r_2$ respectively.

Let $\varepsilon$ be the fundamental unit of $k$. Then $\{\varepsilon r_1, \varepsilon r_2\}$ is also integral basis of $\mathfrak{a}$, and thus we can find a matrix 
$M=
\begin{bmatrix}
a&b\\
c&d
\end{bmatrix}
$
with integer entries satisfying 
$$\varepsilon\begin{bmatrix}
r_1\\r_2
\end{bmatrix}
=M\begin{bmatrix}
r_1\\
r_2
\end{bmatrix}.$$
We can now recall the following result of Lang \cite{LAN} which is one of the main ingredient to prove our results.
\begin{theorem}[Lang \cite{LAN}]\label{thm2.2}
By keeping the above notations, we have 
\begin{align*}
\zeta_k(-1, \mathfrak{A})&=\frac{\textsl{sgn }\delta(\mathfrak{a})~r_2r_2'}{360N(\mathfrak{a})c^3}\big\{(a+d)^3-6(a+d)N(\varepsilon)-240c^3(\textsl{sgn } c)\\
&\times S^3(a,c)+180ac^3(\textsl{sgn } c)S^2(a,c)-240c^3(\textsl{sgn } c)S^3(d,c)\\
& +180dc^3(\textsl{sgn } c)S^2(d,c) \big\},
\end{align*}
where $N(\mathfrak{a})$ is the norm of $\mathfrak{a}$ and $S^i(-,-)$ denotes the generalized Dedekind sum as defined in \cite{AP50}.
\end{theorem}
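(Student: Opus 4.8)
The plan is to compute the partial zeta function $\zeta_k(s,\mathfrak{A})$ directly as a lattice sum over a fundamental domain for the unit action, continue it analytically, and read off the value at $s=-1$. First I would use the standard bijection between the integral ideals lying in the class $\mathfrak{A}$ and the nonzero elements $\mu$ of the fixed ideal $\mathfrak{a}\in\mathfrak{A}^{-1}$, taken up to multiplication by units: if $\mathfrak{b}=\mu\mathfrak{a}^{-1}$ then $N(\mathfrak{b})=|N(\mu)|/N(\mathfrak{a})$, so that
\[
\zeta_k(s,\mathfrak{A})=N(\mathfrak{a})^{s}\sum_{\mu}\frac{1}{|N(\mu)|^{s}},
\]
the sum running over representatives of $(\mathfrak{a}\setminus\{0\})/\langle\varepsilon\rangle$ with a fixed choice of signs under the two real embeddings. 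Writing $\mu=xr_1+yr_2$ with $(x,y)\in\mathbb{Z}^2$, the norm becomes an integral binary quadratic form $N(\mu)=Q(x,y)$ with $Q(x,y)=r_1r_1'x^2+(r_1r_2'+r_1'r_2)xy+r_2r_2'y^2$, whose discriminant is exactly $\delta(\mathfrak{a})^2$. The relation $\varepsilon\begin{bmatrix}r_1\\r_2\end{bmatrix}=M\begin{bmatrix}r_1\\r_2\end{bmatrix}$ shows that the unit acts on the coordinate vector $(x,y)$ by the transpose of $M$, so that the orbit space is governed entirely by $M$, while $\mathrm{sgn}\,\delta(\mathfrak{a})$ records the orientation of the embedding $\mu\mapsto(\mu,\mu')$.

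Next I would choose an explicit fundamental domain for $\langle M\rangle$ acting on the relevant sign-cone in $\mathbb{R}^2$. Following the cone philosophy of Siegel and Shintani, which in the rank-one real quadratic situation reduces to a single two-dimensional wedge, the domain is bounded by the two rays through the fixed directions of $M$, i.e. the lines whose slopes are governed by $a/c$ and $d/c$. Over such a wedge the sum $\sum 1/|Q(x,y)|^{s}$ is a finite combination of Barnes-type double zeta functions $\sum_{x,y\ge 0}(\alpha x+\beta y+\gamma)^{-s}$, each of which continues meromorphically to the whole $s$-plane with values at nonpositive integers expressible through Bernoulli polynomials. Since $M$ represents multiplication by $\varepsilon$, its eigenvalues are $\varepsilon$ and $\varepsilon'$, whence $\det M=N(\varepsilon)$ and $\mathrm{tr}\,M=a+d=\varepsilon+\varepsilon'$; these are precisely the quantities that will appear in the polynomial part.

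The heart of the argument is the Euler--Maclaurin evaluation of the double sum at $s=-1$. The interior, or principal, part of the summation, coming from integrating $Q$ over the wedge, produces the rational term $(a+d)^3-6(a+d)N(\varepsilon)$ after substituting $\mathrm{tr}\,M$ and $\det M$; the normalizing factor $1/360$ and the cubic degree reflect that $\zeta(-1)$-type values are controlled by the third Bernoulli polynomial. The boundary contributions, obtained by summing products of $\bar B_1,\bar B_2,\bar B_3$ evaluated at the fractional parts of $(ah+\cdots)/c$ as $h$ ranges modulo $c$, assemble precisely into the generalized Dedekind sums of Apostol, giving the four terms in $S^3(a,c),S^2(a,c),S^3(d,c),S^2(d,c)$ with the stated coefficients $240$ and $180$. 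Here $\mathrm{sgn}\,c$ enters through the orientation of the lattice-point count along the two bounding rays, and the prefactor $r_2r_2'/(N(\mathfrak{a})c^3)$ records the change of variables from $(x,y)$ back to $\mu$.

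I expect the main obstacle to be the precise bookkeeping rather than any single deep idea: one must pin down the fundamental wedge together with all of the sign conventions ($\mathrm{sgn}\,\delta(\mathfrak{a})$, $\mathrm{sgn}\,c$, and the sign of $N(\varepsilon)$), control the boundary lattice points shared between adjacent cones, and match the resulting Bernoulli-polynomial sums against the closed-form Dedekind sums, for which the reciprocity laws of \cite{AP50} are indispensable. Verifying that the spurious lower-order terms cancel so as to leave exactly the displayed combination with coefficients $240$ and $180$ is the delicate point, and pinning down these constants is where almost all of the work lies.
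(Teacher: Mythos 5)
First, note that the paper itself offers no proof of this statement: Theorem \ref{thm2.2} is quoted verbatim from Lang \cite{LAN} and used as a black box, so there is no internal argument to compare yours against. Judged on its own terms, your proposal is a reasonable description of the Shintani--Zagier cone strategy (closer in spirit to \cite{ZA76} than to Lang's original derivation, which proceeds via Hecke's integral representation and Apostol's transformation formula for Lambert series --- that is precisely why the generalized Dedekind sums of \cite{AP50} enter), but it is an outline rather than a proof. Every quantitatively meaningful part of the statement --- the prefactor $\textsl{sgn }\delta(\mathfrak{a})\,r_2r_2'/(360N(\mathfrak{a})c^3)$, the polynomial $(a+d)^3-6(a+d)N(\varepsilon)$, the coefficients $240$ and $180$, the appearance of exactly $S^3(a,c)$, $S^2(a,c)$, $S^3(d,c)$, $S^2(d,c)$, and the $\textsl{sgn } c$ factors --- is asserted to ``assemble precisely'' or to ``come out'' without any computation, and you say yourself that pinning these down is where almost all of the work lies. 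That work is the theorem; deferring it leaves a genuine gap.

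Two more concrete points where the sketch would need repair before it could be executed. The fundamental domain for $\langle M\rangle$ acting on a sign-cone is not bounded by the eigendirections of $M$ (those are irrational fixed rays, and the quotient of the open cone between them by $\langle M\rangle$ is not a wedge with lattice-rational edges); in the Shintani/Zagier setup one takes a rational ray and its image under $M$, or the fan of rays coming from the continued-fraction expansion, and the slopes $a/c$, $d/c$ enter through the columns of $M$ acting on a chosen edge, not as ``fixed directions.'' Second, the identification of the boundary Bernoulli sums with Apostol's $S^2$ and $S^3$ is not automatic: Apostol's sums $S^p(h,k)=\sum_{\mu\bmod k}\frac{\mu}{k}\bar B_p(\frac{h\mu}{k})$ have a specific normalization, and matching the Euler--Maclaurin boundary terms to them (including the correct treatment of lattice points on the shared edges of adjacent cones, which is exactly where the $\textsl{sgn } c$ and the constant $-6(a+d)N(\varepsilon)$ corrections arise) requires the reciprocity law of \cite{AP50} and careful sign bookkeeping that the proposal only names. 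As it stands, the argument establishes that $\zeta_k(-1,\mathfrak{A})$ is \emph{some} universal polynomial in $a$, $d$, $N(\varepsilon)$ and generalized Dedekind sums, but not that it is the one displayed.
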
 
We need to determine the values of $a,b,c,d$ and generalized Dedekind sum in order to apply Theorem \ref{thm2.2}. The following result of Kim \cite{KI88} helps us to determine the values of $a, b, c$ and $d$.
\begin{lemma}[Kim \cite{KI88}]\label{2.1} The entries of $M$ are given by 
\begin{align*}
a&=Tr\left(\frac{r_1 r_2'\varepsilon}{\delta(\mathfrak{a})}\right),~
b=Tr\left(\frac{r_1 r_1'\varepsilon'}{\delta(\mathfrak{a})}\right),~
c=Tr\left(\frac{r_2 r_2'\varepsilon}{\delta(\mathfrak{a})}\right) \text{ and }\\
d&=Tr\left(\frac{r_1 r_2'\varepsilon'}{\delta(\mathfrak{a})}\right).
\end{align*}
Moreover, $\det(M)=N(\varepsilon)$ and $bc\ne 0$.
\end{lemma}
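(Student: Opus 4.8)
The plan is to read off the four entries of $M$ by a single Cramer's-rule computation and then to interpret $M$ structurally in order to dispatch the two supplementary assertions. Since $a,b,c,d\in\mathbb{Z}\subset\mathbb{Q}$ are fixed by the nontrivial automorphism of $k/\mathbb{Q}$, I would first record the two defining relations together with their conjugates:
\[
\varepsilon r_1 = a r_1 + b r_2,\qquad \varepsilon' r_1' = a r_1' + b r_2',
\]
\[
\varepsilon r_2 = c r_1 + d r_2,\qquad \varepsilon' r_2' = c r_1' + d r_2'.
\]
The first relation and its conjugate constitute a linear system in the unknowns $a,b$ whose coefficient matrix $\begin{bmatrix} r_1 & r_2 \\ r_1' & r_2'\end{bmatrix}$ has determinant exactly $\delta(\mathfrak{a})=r_1r_2'-r_1'r_2$, which is nonzero because $\{r_1,r_2\}$ is a $\mathbb{Q}$-basis of $k$.

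Cramer's rule then produces $a=(\varepsilon r_1 r_2' - \varepsilon' r_1' r_2)/\delta(\mathfrak{a})$ and $b = r_1 r_1'(\varepsilon'-\varepsilon)/\delta(\mathfrak{a})$, and the same computation applied to the second relation gives $c = r_2 r_2'(\varepsilon-\varepsilon')/\delta(\mathfrak{a})$ and $d = (\varepsilon' r_1 r_2' - \varepsilon r_1' r_2)/\delta(\mathfrak{a})$. The single observation that converts these quotients into the asserted traces is that conjugation negates $\delta(\mathfrak{a})$, i.e. $\delta(\mathfrak{a})' = r_1'r_2 - r_1 r_2' = -\delta(\mathfrak{a})$. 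For example, $\mathrm{Tr}\!\big(r_1 r_2'\varepsilon/\delta(\mathfrak{a})\big) = r_1 r_2'\varepsilon/\delta(\mathfrak{a}) + r_1' r_2 \varepsilon'/\delta(\mathfrak{a})' = (r_1 r_2'\varepsilon - r_1' r_2 \varepsilon')/\delta(\mathfrak{a})$, which is precisely the Cramer expression for $a$; the identical bookkeeping recovers the formulas for $b$, $c$, and $d$. I expect this to be entirely routine, the only point needing care being the consistent tracking of the sign coming from $\delta(\mathfrak{a})'=-\delta(\mathfrak{a})$ in each of the four cases, so that $b$ and $c$ land on $\varepsilon'$ and $\varepsilon$ with the correct orientation.

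For $\det(M)=N(\varepsilon)$, the cleanest route is to observe that $M$ is (up to transpose, which is irrelevant for the determinant) the matrix in the basis $\{r_1,r_2\}$ of the $\mathbb{Q}$-linear endomorphism ``multiplication by $\varepsilon$'' of $k$, whose determinant is $N_{k/\mathbb{Q}}(\varepsilon)=\varepsilon\varepsilon'$; alternatively one verifies $ad-bc=N(\varepsilon)$ directly from the four formulas just derived (the same kind of collapse also yields $a+d=\mathrm{Tr}(\varepsilon)$ as a consistency check). Finally, the explicit shapes $b = r_1 r_1'(\varepsilon'-\varepsilon)/\delta(\mathfrak{a})$ and $c = r_2 r_2'(\varepsilon-\varepsilon')/\delta(\mathfrak{a})$ make $bc\neq 0$ immediate, since $r_i r_i' = N(r_i)\neq 0$ as each $r_i\neq 0$, the difference $\varepsilon-\varepsilon'\neq 0$ because the fundamental unit of a real quadratic field is irrational, and $\delta(\mathfrak{a})\neq 0$ as already noted. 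Thus the main obstacle here is purely organizational rather than conceptual: there is no deep difficulty, only the need to keep the four Cramer computations and their sign corrections straight.
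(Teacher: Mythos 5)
Your argument is correct and complete: the Cramer's-rule computation, the sign observation $\delta(\mathfrak{a})'=-\delta(\mathfrak{a})$ that converts the quotients into traces, the identification of $\det M$ with $N_{k/\mathbb{Q}}(\varepsilon)$, and the factorizations $b=N(r_1)(\varepsilon'-\varepsilon)/\delta(\mathfrak{a})$, $c=N(r_2)(\varepsilon-\varepsilon')/\delta(\mathfrak{a})$ giving $bc\neq 0$ all check out. The paper itself supplies no proof of this lemma (it is quoted from Kim's thesis), so there is nothing to compare against; your derivation is the standard one and can stand as written.
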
 
Kim \cite{KI88} obtained the following expressions for generalized Dedekind sum by using reciprocity law. These expressions are also needed to compute the values of zeta functions for ideal classes of the respective real quadratic fields.
\begin{lemma}[Kim \cite{KI88}]\label{DS1} For any positive integer $m$, we have
\begin{itemize}
\item[(i)] $S^3(\pm 1, m)=\pm(-m^4+5m^2-4)/(120m^3),$\vspace*{2mm}
\item[(ii)] $S^2(\pm 1, m)=(m^4+10m^2-6)/(180m^3).$
\end{itemize}
\end{lemma}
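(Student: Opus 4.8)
The plan is to reduce both parts to a single computation of $S^i(1,m)$ and then to evaluate that sum through the reciprocity law for the generalized Dedekind sums of \cite{AP50}, exploiting the fact that these sums degenerate as soon as one argument equals $\pm 1$.

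First I would settle the dependence on the sign of the first argument. Writing $S^i(h,m)$ out in terms of the periodic Bernoulli functions $\bar B_j$, the integer $h$ enters only through a factor of the form $\bar B_j(h\mu/m)$ whose index $j$ has the same parity as $i$. Since $\bar B_j(-x)=(-1)^j\bar B_j(x)$ at every non-integer $x$, and in particular at the points $x=\mu/m$ with $1\le\mu\le m-1$, this gives at once $S^i(-1,m)=(-1)^i S^i(1,m)$. This is exactly the structure of the statement: for the odd index $i=3$ the sum reverses sign, which is the $\pm$ in (i), while for the even index $i=2$ it is unchanged, which is why (ii) carries no sign. It therefore suffices to compute $S^i(1,m)$.

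For this I would apply the reciprocity law of \cite{AP50} to the coprime pair $(1,m)$. The law expresses a fixed weighted combination of $S^i(1,m)$ and $S^i(m,1)$ as an explicit polynomial in $m$ whose coefficients are products of Bernoulli numbers. The companion term $S^i(m,1)$ is a degenerate Dedekind sum whose summation runs over the residues modulo $1$; it collapses to a single term and evaluates to a product of Bernoulli numbers (and in fact vanishes whenever an odd-index $\bar B_1$-factor is present). Solving the reciprocity relation for $S^i(1,m)$ and substituting the numerical values $B_2=\tfrac16$, $B_3=0$ and $B_4=-\tfrac1{30}$ then produces a rational function of $m$ with denominator a fixed power of $m$, which I expect to simplify to $(-m^4+5m^2-4)/(120m^3)$ for $i=3$ and to $(m^4+10m^2-6)/(180m^3)$ for $i=2$.

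The main obstacle is entirely one of bookkeeping: one must carry the Bernoulli-number coefficients and the powers of $m$ through the reciprocity formula accurately enough that, after substitution, the numerators collapse to exactly the quartics $-m^4+5m^2-4$ and $m^4+10m^2-6$. Conventions deserve special care here --- the value of $\bar B_1$ at the integers, whether the defining sum includes $\mu=0$, and the sign of $B_1$ --- since a single misplaced constant would corrupt the coefficients $5,4,10,6$. As an independent verification, and as an alternative route sidestepping the reciprocity law, one may note that with first argument $1$ the sum $S^i(1,m)$ is a finite sum of products of Bernoulli polynomials evaluated at $\mu/m$; expanding the product into a polynomial in $\mu/m$ and applying the Faulhaber formulas for $\sum_{\mu=1}^{m-1}\mu^{j}$ reproduces the same closed forms. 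In particular the dominant term is controlled by $\int_0^1\bar B_2(x)^2\,dx=\tfrac1{180}$ and $\int_0^1\bar B_1(x)\bar B_3(x)\,dx=-\tfrac1{120}$, which already matches the leading coefficient of each expression and gives a reassuring check on the full computation.
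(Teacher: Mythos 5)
The paper itself offers no proof of this lemma: it is imported verbatim from Kim's thesis \cite{KI88}, with only the remark that Kim obtained the formulas ``by using reciprocity law.'' So there is no argument in the paper to compare yours against line by line; what can be judged is whether your reconstruction would actually land on the stated closed forms. Your overall plan is sound, and in particular your fallback route is the right one: for first argument $\pm 1$ the sum $S^i(\pm1,m)$ involves Bernoulli functions evaluated only at the rational points $\mu/m$, so it is a pure Faulhaber computation needing no reciprocity at all, and your parity argument $S^i(-1,m)=(-1)^iS^i(1,m)$ and your leading--term checks $\int_0^1 \bar B_1\bar B_3=-\tfrac1{120}$, $\int_0^1\bar B_2^2=\tfrac1{180}$ are all correct.

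Two of the issues you flag as ``bookkeeping'' are, however, genuinely load-bearing and are left unresolved. First, the $\mu=0$ convention is not cosmetic: the sums here must be taken over a full residue system modulo $m$ including $\mu=0$. For $i=3$ this is invisible (the extra term is $\bar B_3(0)\bar B_1(0)=0$), but for $i=2$ the term $\bar B_2(0)^2=\tfrac1{36}$ is exactly the difference between the stated numerator $m^4+10m^2-6$ and the numerator $m^4-5m^3+10m^2-6$ you would get from summing only over $1\le\mu\le m-1$; one can check this against the paper's own use of $S^2(1,4)=41/1152$ in the proof of Theorem \ref{thm3.2}. Second, the reciprocity law of \cite{AP50} as actually stated there concerns the single-Bernoulli sums $s_p(h,k)=\sum_\mu\tfrac{\mu}{k}\bar B_p(h\mu/k)$ and holds for odd $p$; this coincides with $S^3$ (since $\sum_\mu\bar B_3(h\mu/k)=0$) but not with $S^2$, which is a genuinely two-Bernoulli sum $\sum_\mu\bar B_2(h\mu/k)\bar B_2(\mu/k)$ and requires the more general reciprocity of Carlitz--Mikol\'as type or, more simply, the direct summation. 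If you fix the residue-system convention and either drop the reciprocity route for part (ii) or replace it by the appropriate two-variable reciprocity, your direct computation does reproduce both formulas (e.g.\ $S^3(1,3)=-1/81$ and $S^2(1,3)=11/324$ agree with the stated quartics).
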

\begin{lemma}[Kim \cite{KI88}]\label{DS2} For any positive even integer $m$, we have
\begin{itemize}
\item[(i)] $S^3(m\pm 1, 2m)=\pm S^1(m+1, 2m)=\mp(m^4-50m^2+4)/(960m^3),$ \vspace*{2mm}
\item[(ii)] $S^2(m-1, 2m)=S^2(m+1, 2m)=(m^4+100m^2-6)/(1440m^3).$\vspace*{2mm}
\item[(iii)] $S^3(m+1, 4m)=(-m^4-180m^3+410m^2-4)/(7680m^3),$\vspace*{2mm}
\item[(iv)] $S^3(m-1, 4m)=(m^4-180m^3-410m^2+4)/(7680m^3),$\vspace*{2mm}
\item[(v)] $S^2(m-1, 4m)=S^2(m+1, 4m)=(m^4+820m^2-6)/(11520m^3).$
\end{itemize}
\end{lemma}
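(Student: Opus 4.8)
The plan is to derive every identity in Lemma \ref{DS2} from Apostol's reciprocity law for the generalized Dedekind sums $S^i(h,k)$ \cite{AP50}, using Lemma \ref{DS1} to supply the base values. Two elementary features of these sums will be used throughout: each $S^i(h,k)$ depends only on the residue of $h$ modulo $k$, and, since the $h$-dependence enters only through the periodic Bernoulli function $\bar B_i$, one has the reflection rule $S^i(-h,k)=(-1)^i S^i(h,k)$. For even $m$ both $m-1$ and $m+1$ are odd, so $\gcd(m\pm 1,2m)=\gcd(m\pm 1,4m)=1$, and the reciprocity law applies to every pair occurring in the statement. I would first record the two equalities that are purely formal: since $m-1\equiv-(m+1)\pmod{2m}$, the reflection rule gives at once $S^3(m-1,2m)=-S^3(m+1,2m)$ and $S^2(m-1,2m)=S^2(m+1,2m)$, which accounts for the signs in (i) and the coincidence in (ii).

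The substantive step is a two-fold reciprocity descent that mimics the Euclidean algorithm on the pair of arguments. For (i), the reciprocity law expresses $S^3(m+1,2m)$ in terms of $S^3(2m,m+1)$ and an explicit polynomial in $m$; reducing the first argument gives $2m\equiv-2\pmod{m+1}$, whence $S^3(2m,m+1)=-S^3(2,m+1)$ by the reflection rule. A second application of reciprocity to $S^3(2,m+1)$ brings in $S^3(m+1,2)=S^3(1,2)$, whose value is read off from Lemma \ref{DS1} at $m=2$. The same descent handles (iii) and (iv): reciprocity sends $S^3(m\pm1,4m)$ to $S^3(4m,m\pm1)$, and $4m\equiv\mp4\pmod{m\pm1}$ reduces this to $\mp S^3(4,m\pm1)$; a second reciprocity step lowers the modulus to $4$, landing on $S^3(m\pm1,4)$, a value supplied by Lemma \ref{DS1}. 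Parts (ii) and (v) are treated identically with $S^2$ in place of $S^3$, the only change being the reflection sign $(-1)^2=+1$. At each swap the reciprocity law contributes an explicit polynomial in the current arguments (with coefficients built from $B_2,B_3,B_4$), so collecting these contributions together with the base values produces rational functions in $m$; simplification should then yield exactly the displayed expressions.

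The main obstacle will be the bookkeeping rather than any conceptual difficulty. One must use the correct form of Apostol's reciprocity for each order $i$ (the degree and the Bernoulli coefficients differ between $S^2$ and $S^3$) and carry the polynomial error terms through two descent steps without error. Two points deserve particular care. First, for the modulus-$4m$ cases the final base value is $S^i(m\pm1,4)$, which a priori depends on $m\bmod 4$ (it is $S^i(1,4)$ or $S^i(3,4)=(-1)^iS^i(1,4)$); one must check that this residue dependence is cancelled exactly by the accompanying polynomial terms, so that the formulae in (iii)--(v) hold uniformly in $m$. Second, since $m-1\not\equiv-(m+1)\pmod{4m}$, the sums in (iii)--(iv) and the two equal sums in (v) are genuinely distinct computations: the equality $S^2(m-1,4m)=S^2(m+1,4m)$ in (v) is not a reflection identity but a coincidence that emerges only after simplification, and reproducing it provides a useful internal consistency check on the constants.
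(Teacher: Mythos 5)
The paper offers no proof of Lemma \ref{DS2} to compare against: it is imported from Kim's thesis \cite{KI88} with only the remark that the values were obtained ``by using reciprocity law''. Your skeleton --- periodicity of $S^i(h,k)$ in $h$ modulo $k$, the reflection rule $S^i(-h,k)=(-1)^iS^i(h,k)$, and a two-step Euclidean descent landing on the base values of Lemma \ref{DS1} --- is therefore the intended route, and the purely formal parts ($S^3(m-1,2m)=-S^3(m+1,2m)$, $S^2(m-1,2m)=S^2(m+1,2m)$, the reductions $2m\equiv-2\pmod{m+1}$ and $4m\equiv\mp4\pmod{m\pm1}$) are correct. But there are two genuine gaps. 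First, ``Apostol's reciprocity law'' does not cover $S^2$: Apostol \cite{AP50} proves reciprocity for $s_p(h,k)=\sum_{\mu}(\mu/k)\bar B_p(h\mu/k)$ only for odd $p$, and for even $p$ that sum is independent of $h$ (it equals $\tfrac12(k^{1-p}-1)B_p$), which is incompatible with Lemma \ref{DS1}(ii). The $S^2$ actually in play is the two-Bernoulli sum $\sum_{\mu\bmod k}\bar B_2(h\mu/k)\bar B_2(\mu/k)$ (check: $S^2(1,3)=B_2(0)^2+2B_2(1/3)^2=11/324$, matching Lemma \ref{DS1}(ii)), so parts (ii) and (v) require the Carlitz--Mikol\'as reciprocity for such sums; treating them ``identically with $S^2$ in place of $S^3$'' under Apostol's theorem does not go through, and you must first pin down the definition before any reciprocity can be invoked.

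Second, the verification you defer --- that the dependence of the base value $S^i(m\pm1,4)$ on $m\bmod 4$ is cancelled by the accompanying polynomial terms --- in fact fails, because those polynomial terms depend only on $m$ and not on its residue class. Concretely, for $m=2$ part (iii) predicts $S^3(3,8)=180/61440=3/1024$, whereas direct evaluation of $\sum_{\mu=1}^{7}(\mu/8)\bar B_3(3\mu/8)$ gives $-9/1024$; for $m=4$ one finds $S^3(5,16)=-87/8192$, in agreement with (iii). The discrepancy at $m\equiv2\pmod 4$ is exactly the propagated jump between the two possible base values $S^3(m+1,4)=\pm S^3(1,4)$ (note that the term $-180m^3$ in (iii) is precisely $7680m^3\,S^3(1,4)$, i.e.\ the formula was derived assuming $m+1\equiv1\pmod 4$). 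So (iii)--(v) are not uniform in even $m$: carried out correctly, your descent would produce two branches according to $m\bmod 4$, and you would have to either impose $4\mid m$ or record both cases --- and then check which branch each application in \S3--\S4 actually needs. Your instinct that this was the dangerous spot was right; your assumption that it resolves favourably was not.
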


\section{Real quadratic fields with class number $3$}
In this section, we compute the value $\zeta_k(-1, 
\mathfrak{A})$ for some ideal class $\mathfrak{A}$ in $k$, and then compare these values to $\zeta_k(-1)$ to derive our results. Throughout this section, $k$ is a real quadratic field of Richaud-Degert (R-D) type, more precisely $k=\mathbb{Q}(\sqrt{d})$ with radicand $d = n^2 + r$ satisfying $r\mid 4n$ and $-n<r\leq n$.
Degert \cite[Satz 1]{D49} shows that the fundamental unit $\varepsilon$ of $k$ and its norm $N(\varepsilon)$ are:
\begin{align}
\varepsilon =\begin{cases} n+\sqrt{n^2+r}, & N(\varepsilon)=-\textsl{sgn } r, \hspace{3mm} \text{ if }  |r|=1,\\
\dfrac{n+\sqrt{n^2+r}}{2},&  N(\varepsilon)=-\textsl{sgn } r, \hspace{3mm} \text{ if } |r|=4,\\
\dfrac{2n^2+r}{|r|}+\dfrac{2n}{|r|}\sqrt{n^2+r}, & N(\varepsilon)=1, \hspace{13.5mm} \text{ if } |r|\neq1,4.\\
\end{cases}
\end{align}
It is easy to see that  $n^2+1\not\equiv 3\pmod 4,~ n^2-1\not\equiv 1, 2\pmod 4, ~n^2\pm 4\not\equiv 2,3 \pmod 4$ and $n^2-4\not\equiv 1 \pmod 8$. Thus to cover all real quadratic fields of narrow R-D type, it is enough to consider the following cases:
\begin{itemize}
\item[(i)] $n^2+1\equiv 1, 2 \pmod 4$,
\item[(ii)] $n^2-1\equiv 3\pmod 4$,
\item[(iii)] $n^2+4\equiv 1\pmod 4$, 
\item[(iv)] $n^2-4\equiv 5\pmod 8$.
\end{itemize}

We consider the real quadratic field $k=\mathbb{Q}(\sqrt{d})$ of R-D type with $d\equiv 1\pmod 8$.  Then $2$ splits in $k$, that is, 
$$(2)=\Big(2,\frac{1+\sqrt{d}}{2}\Big)\Big(2,\frac{1-\sqrt{d}}{2}\Big).$$ 
Note that $n$ is even if $|r|\ne 1,4$. In fact, when $n$ is odd, one
has that $1 \equiv d = n^2 + r\equiv r + 1 \pmod 8$, which is contrary to the assumption $r\mid 4n$. 
The case $|r| = 4$ can not occur since $n^2 \pm 4\not\equiv 1 \pmod 8$. We extract the following result from Theorem 2.3 of \cite{BK1}.
\begin{theorem}[Byeon and Kim \cite{BK1}]\label{thm3.1}
Let $d=n^2+r$, and let $k=\mathbb{Q}{(\sqrt{d})}$ be a real quadratic field of R-D type. Let $\mathfrak{P}$ be denote the ideal class of principal ideals of $k$. If $d \equiv 1 \pmod 8$, then 
\begin{equation*}
\zeta_k(-1, \mathfrak{P})=\begin{cases}
\dfrac{n^3+14n}{360},  &{\rm ~if~} |r|=1,\\
\\
\dfrac{2n^3(r^2+1)+n(3r^3+50r^2+3r)}{720r^2},  &{\rm ~if~} |r|\neq 1,4.
\end{cases}
\end{equation*}
\end{theorem}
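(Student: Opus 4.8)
The plan is to apply Lang's formula (Theorem~\ref{thm2.2}) directly to the principal class. Since $\mathfrak{P}$ is the identity of the class group it is its own inverse, so I may represent $\mathfrak{P}^{-1}=\mathfrak{P}$ by the ring of integers itself, $\mathfrak{a}=\mathcal{O}_k$. The hypothesis $d\equiv 1\pmod 8$ gives $d\equiv 1\pmod 4$, hence the discriminant is $D=d$ and $\{r_1,r_2\}=\{1,(1+\sqrt d)/2\}$ is an integral basis of $\mathfrak{a}$. First I would record the elementary data entering Theorem~\ref{thm2.2}: $\delta(\mathfrak{a})=r_1r_2'-r_1'r_2=-\sqrt d$, so $\textsl{sgn}\,\delta(\mathfrak{a})=-1$; $N(\mathfrak{a})=1$; and $r_2r_2'=(1-d)/4$.

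Next I would compute the matrix $M=\left[\begin{smallmatrix} a&b\\ c&\tilde d\end{smallmatrix}\right]$ from Kim's Lemma~\ref{2.1} (I write $\tilde d$ for the lower-right entry to avoid clashing with the radicand $d$). Writing the fundamental unit as $\varepsilon=u+w\sqrt d$, a direct trace computation yields $a=u-w$, $\tilde d=u+w$ (so $a+\tilde d=2u$), and $c=(d-1)w/2>0$, while $\det M=N(\varepsilon)$ by the same lemma. At this point I would split into the two stated cases according to Degert's formula for $\varepsilon$: when $|r|=1$ one has $u=n,\ w=1$, and since $d\equiv1\pmod 8$ excludes $r=-1$ the relevant subcase is $r=1$ with $N(\varepsilon)=-1$; when $|r|\ne 1,4$ one has $u=(2n^2+r)/|r|,\ w=2n/|r|$ and $N(\varepsilon)=1$. (The case $|r|=4$ cannot occur here, as already noted.)

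The main work, and the step I expect to be the real obstacle, is the evaluation of the four generalized Dedekind sums $S^3(a,c),\,S^2(a,c),\,S^3(\tilde d,c),\,S^2(\tilde d,c)$ that appear in Theorem~\ref{thm2.2}. As computed above the arguments are not literally of the shape $(\pm1,m)$, $(m\pm1,2m)$ or $(m\pm1,4m)$ handled by Lemmas~\ref{DS1}--\ref{DS2}, so some reduction is required. My plan is to exploit the periodicity $S^i(h,k)=S^i(h\bmod k,k)$, the parity of the sums in the first variable ($S^3$ odd, $S^2$ even), the reciprocity law of \cite{AP50} that Kim used to derive those lemmas, and the congruence $a\tilde d\equiv\det M=N(\varepsilon)\pmod c$ (note $\gcd(a,c)=1$ since $\det M=\pm1$, so $\tilde d\equiv N(\varepsilon)\,a^{-1}$): after one or two reciprocity steps each sum should collapse to an $S^i(\pm1,m)$ evaluable by Lemma~\ref{DS1}, with the polynomial correction terms from reciprocity furnishing the bulk of the answer. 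Getting these reductions and their correction polynomials exactly right is the delicate point.

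Finally I would substitute $\textsl{sgn}\,\delta(\mathfrak{a})$, $r_2r_2'$, $N(\mathfrak{a})$, the entries $a,c,\tilde d$, the norm $N(\varepsilon)$, and the evaluated Dedekind sums into Lang's formula and simplify. In each case the prefactor $\textsl{sgn}\,\delta(\mathfrak{a})\,r_2r_2'/(360\,N(\mathfrak{a})\,c^3)$ combines with $(a+\tilde d)^3-6(a+\tilde d)N(\varepsilon)$ and with the $c^3(\textsl{sgn}\,c)S^i$ terms (whose evaluations carry cubic denominators) to leave a rational function of $n$ and $r$; this should collapse to $\tfrac{n^3+14n}{360}$ for $|r|=1$ and to $\tfrac{2n^3(r^2+1)+n(3r^3+50r^2+3r)}{720r^2}$ for $|r|\ne1,4$. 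I expect this closing computation to be long but entirely mechanical once the Dedekind sums are in hand.
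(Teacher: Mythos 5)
Your setup (Lang's formula applied to $\mathfrak{a}=\mathcal{O}_k$, Kim's lemma for the matrix entries, Degert's description of $\varepsilon$) is the right framework --- it is exactly what the paper does for the analogous Theorem \ref{thm3.2} and what \cite{BK1} does here --- but there is a genuine gap precisely where you flag ``the real obstacle.'' With your ordering $r_1=1$, $r_2=(1+\sqrt d)/2$, the lower-left entry is $c=w(d-1)/2$, which grows like $d$, and the resulting sums $S^i(u\mp w,\,w(d-1)/2)$ are nowhere near the forms covered by Lemmas \ref{DS1}--\ref{DS2}. Your proposed rescue --- iterated reciprocity plus periodicity and the congruence $a\tilde d\equiv N(\varepsilon)\pmod c$ --- is never carried out, and it is not a routine omission: each reciprocity step for the Apostol sums brings its own polynomial correction terms, you need at least two steps per sum (for $|r|=1$ one finds $S^i(n-1,n^2/2)\to S^i(n/2,\,n-1)\to S^i(-1,\,n/2)$), and essentially the whole content of the stated formulas sits inside those corrections. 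As written, the argument does not reach the conclusion.

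The fix is a one-line change that removes the obstacle entirely: order the basis as $r_1=(1+\sqrt d)/2$, $r_2=1$. Then $\delta(\mathfrak a)=\sqrt d>0$, $r_2r_2'=1$, $N(\mathfrak a)=1$, and Kim's lemma gives $a=u+w$, $\tilde d=u-w$, $c=2w$. For $|r|=1$ (necessarily $r=1$, so $u=n$, $w=1$, $N(\varepsilon)=-1$) this yields $c=2$ with $a,\tilde d\equiv\pm1\pmod 2$, so Lemma \ref{DS1} with $m=2$ applies ($S^3(\pm1,2)=0$, $S^2(\pm1,2)=5/144$) and Lang's formula gives $\frac{8n^3+12n+50(n+1)+50(n-1)}{2880}=\frac{n^3+14n}{360}$. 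For $|r|\neq1,4$ one gets $c=2m$ with $m=2n/|r|$ even, and since $n$ is even one checks $a\equiv\tilde d\equiv m+\mathrm{sgn}(r)\pmod{2m}$, so Lemma \ref{DS2}(i)--(ii) applies directly and the stated rational function of $n$ and $r$ drops out after simplification. This is precisely the device used in the paper's proof of Theorem \ref{thm3.2}, where the ideal $\bigl(2,(1+\sqrt d)/2\bigr)$ is given the ordered basis $r_1=(1+\sqrt d)/2$, $r_2=2$ exactly so that $c=4$ and the tabulated Dedekind sums suffice.
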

The following result can be extracted from \cite[Theorem 2.5]{BK2}. However for the sake of completeness, we provide a detailed proof. 
\begin{theorem}[Byeon and Kim \cite{BK2}]\label{thm3.2}
Let $d=n^2+r$, and let $k=\mathbb{Q}{(\sqrt{d})}$ be a real quadratic field of R-D type. Let $\mathfrak{A}$ be the ideal class containing $\Big(2,(1+\sqrt{d})/2\Big)$ or $\Big(2,(1-\sqrt{d})/2\Big)$. If $d \equiv 1\pmod 8$, then
\begin{equation*}
\zeta_k(-1, \mathfrak{A})=\begin{cases}
\dfrac{n^3+104n}{1440},  &{\rm ~if~} |r|=1,\\
\\
\dfrac{2n^3(r^2+1)+n(3r^3+410r^2+3r)}{2880r^2},  &{\rm ~if~} |r|\neq 1,4.
\end{cases}
\end{equation*}
\end{theorem}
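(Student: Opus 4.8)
The plan is to apply Lang's formula (Theorem~\ref{thm2.2}) to a well-chosen integral ideal in $\mathfrak{A}^{-1}$, reading the matrix entries off Kim's Lemma~\ref{2.1} and evaluating the resulting generalized Dedekind sums by Lemmas~\ref{DS1}--\ref{DS2}. Since $d\equiv 1\pmod 8$ the prime $2$ splits as $\mathfrak{p}\bar{\mathfrak{p}}$ with $\mathfrak{p}=(2,(1+\sqrt d)/2)$ and $N(\mathfrak{p})=2$. The partial zeta value is unchanged by the nontrivial automorphism of $k$, which carries an ideal class to its inverse, so $\zeta_k(-1,\mathfrak{A})=\zeta_k(-1,\mathfrak{A}^{-1})$; this is why the two choices in the statement yield the same number, and it lets me compute with the single representative $\mathfrak{a}=\mathfrak{p}\in\mathfrak{A}^{-1}$. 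The decisive preliminary choice is the ordering of the integral basis: I take $r_1=(1+\sqrt d)/2$, $r_2=2$ (rather than the reverse), since placing the rational generator second keeps the entry $c$ bounded and drops the Dedekind sums into the exact range tabulated in Lemmas~\ref{DS1}--\ref{DS2}.

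With this basis one reads off $\delta(\mathfrak{a})=2\sqrt d$ (hence $\operatorname{sgn}\delta=1$), $N(\mathfrak{a})=2$ and $r_2r_2'=4$, with $c>0$ throughout. Substituting the fundamental units of \S3 into Lemma~\ref{2.1} and taking traces, I anticipate diagonal entries $a=n+1$, $d=n-1$ (the entries of $M$) together with $c=4$ when $|r|=1$; and, writing $\varepsilon=u+v\sqrt d$ with $u=(2n^2+r)/|r|$ and $v=2n/|r|$, the entries $a=u+v$, $d=u-v$, $c=4m$ with $m:=v=2n/|r|$ when $|r|\neq 1,4$. Here $r$ is forced to be odd (being $\equiv 1$ or $5\pmod 8$), whence $r\mid n$, $m$ is an even integer, and Lemma~\ref{DS2} becomes applicable; the identity $u=mn+\operatorname{sgn}(r)$, so that $a=m(n+1)+\operatorname{sgn}(r)$ and $d=m(n-1)+\operatorname{sgn}(r)$, is what will govern the reduction.

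It remains to evaluate $S^3(a,c),S^2(a,c),S^3(d,c),S^2(d,c)$. Each is periodic in its first argument modulo $c$ and has a definite parity there, namely $S^3(-h,c)=-S^3(h,c)$ and $S^2(-h,c)=S^2(h,c)$ (precisely the $\pm$/no-$\pm$ dichotomy visible in Lemmas~\ref{DS1}--\ref{DS2}). When $|r|=1$ the congruence $n\equiv 0\pmod 4$ gives $a\equiv 1$ and $d\equiv -1\pmod 4$, so Lemma~\ref{DS1} with modulus $4$ applies and the two $S^3$-terms cancel. When $|r|\neq 1,4$ I reduce $a=m(n+1)+\operatorname{sgn}(r)$ and $d=m(n-1)+\operatorname{sgn}(r)$ modulo $c=4m$, using the residue class of $n\pm1$ modulo $4$ together with the parity above; the reduced arguments land on $\pm(m\pm1)$, the precise signs being dictated by $n\bmod 4$ and $\operatorname{sgn}(r)$, and Lemma~\ref{DS2}(iii)--(v) then supplies their exact values.

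Finally I substitute into Theorem~\ref{thm2.2} and simplify. The hard part is exactly the bookkeeping just described: correctly reducing the first arguments modulo $c$ and tracking the signs forced by the odd/even parity of the sums across the subcases $n\equiv 0,2\pmod 4$ and $\operatorname{sgn}r=\pm1$. Once the correct values are in place the rest is routine algebra---the $S^3$-contributions combine through the difference of Lemma~\ref{DS2}(iii) and (iv) (and vanish identically when $|r|=1$), while the $S^2$-contributions, weighted by $a$ and $d$, produce the bulk of the numerator---so the remaining difficulty is computational rather than conceptual, and collecting terms yields the two stated closed forms.
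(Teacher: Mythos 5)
Your proposal follows exactly the paper's route: the same ideal $\mathfrak{a}=\bigl(2,(1+\sqrt d)/2\bigr)$ with the same ordered basis $\{(1+\sqrt d)/2,\,2\}$, the same matrix entries from Lemma~\ref{2.1}, and the same evaluation of the generalized Dedekind sums via Lemmas~\ref{DS1}--\ref{DS2} before substituting into Theorem~\ref{thm2.2}. The computations you outline (including the reduction $a\equiv\pm(m\pm1)\pmod{4m}$ in the case $|r|\neq1,4$, which the paper leaves to the reader as ``similar'') are correct, so this is essentially the paper's proof with slightly more detail.
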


\begin{proof}
Let us assume that $\mathfrak{a}:=\left(2, (1+\sqrt{d})/2\right)\in \mathfrak{A}^{-1}$. Then $\{r_1=(1+\sqrt{d})/2, ~r_2=2\}$ is an integral basis for $\mathfrak{a}$ and thus $\delta(\mathfrak{a})=2\sqrt{d}$.
We will give computations in detail for $|r|=1$, and the similar argument goes through for other cases. By Lemma \ref{2.1}, we get 
$$\begin{bmatrix}
a&b\\
c&d
\end{bmatrix}
=\begin{bmatrix}
n+1&(d-1)/4\\
4&n-1
\end{bmatrix}
.$$
Since $n^2+1 \equiv 1 \pmod 8$, so that $4|n$, and thus $n\pm 1 \equiv\pm 1\pmod 4$. Hence by Lemma \ref{DS1}, we obtain \vspace*{2mm}\\
$240c^3(\textsl{sgn }c)S^3(a,c)=240c^3S^3(n+1,4)=240\times4^3S^3(1,4)=-360,$\vspace*{2mm}
$240c^3(\textsl{sgn } c)S^3(d,c)=240c^3S^3(n-1,4)=240\times4^3S^3(-1,4)=360,$
$$180ac^3(\textsl{sgn }c)S^2(a,c)=180ac^3S^2(n+1,4)=180\times 4^3a S^2(1,4)=410(n+1),$$
$$180dc^3(\textsl{sgn }c)S^2(d,c)=180dc^3S^2(n-1,4)=180\times 4^3dS^2(-1,4)=410(n-1).$$
By Theorem \ref{thm2.2}, we get $$\zeta_k(-1, \mathfrak{A})=\frac{n^3+104n}{1440}.$$
\end{proof}
Let $h(d)$ denote the class number of $\mathbb{Q}(\sqrt{ d})$.
\begin{theorem}\label{thm3.3}
Let $d=n^2+1\equiv 1\pmod 8$ be a square-free integer. If $h(d)=3$ then $$\sum_{\substack{ |t|<\sqrt{d}\\ t^2\equiv d\pmod 4}}\sigma\left(\frac{d-t^2}{4}\right)=\frac{n^3+44n}{4}.$$
The converse holds if $h(d)$ is odd with one exception, viz. $d=17$. 
\end{theorem}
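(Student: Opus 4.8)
The plan is to compute $\zeta_k(-1)$ in two independent ways and match them. Since $d=n^2+1\equiv 1\pmod 8$ we have $d\equiv 1\pmod4$, so the discriminant equals $d$ and Theorem \ref{thm2.1} reads $\zeta_k(-1)=\frac{1}{60}\sum \sigma\big((d-t^2)/4\big)$. On the other hand $\zeta_k(-1)=\sum_{\mathfrak C}\zeta_k(-1,\mathfrak C)$, the sum of partial zeta values over the ideal classes, and Theorems \ref{thm3.1} and \ref{thm3.2} supply the two relevant values $\zeta_k(-1,\mathfrak P)=\frac{n^3+14n}{360}$ and $\zeta_k(-1,\mathfrak A)=\frac{n^3+104n}{1440}$, where $\mathfrak A$ is the class of $\mathfrak p_2:=(2,(1+\sqrt d)/2)$. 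The whole argument rests on organising the class group so that only these two values occur.

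First I would record two preliminary facts. (i) For $d\neq 17$ the ideal $\mathfrak p_2$ is non-principal: if $\mathfrak A=\mathfrak P$, then the two displayed values must coincide, i.e. $\frac{n^3+104n}{1440}=\frac{n^3+14n}{360}$, which simplifies to $3n(n^2-16)=0$, forcing $n=4$ and $d=17$. (ii) The nontrivial automorphism of $k$ sends an ideal class to its inverse and preserves norms, so $\zeta_k(-1,\mathfrak A^{-1})=\zeta_k(-1,\mathfrak A)$; in particular $[\mathfrak p_2']=\mathfrak A^{-1}$ carries the same partial zeta value as $\mathfrak A$.

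For the forward direction, assume $h(d)=3$. Then $d\neq 17$ (as $h(17)=1$), so by (i) the class $\mathfrak A$ is non-trivial, and since a group of order $3$ is cyclic it is generated by $\mathfrak A$ with classes $\{\mathfrak P,\mathfrak A,\mathfrak A^{-1}\}$. Using (ii) I obtain $\zeta_k(-1)=\zeta_k(-1,\mathfrak P)+2\zeta_k(-1,\mathfrak A)=\frac{n^3+14n}{360}+2\cdot\frac{n^3+104n}{1440}=\frac{n^3+44n}{240}$. Comparing with Theorem \ref{thm2.1} then yields $\sum \sigma\big((d-t^2)/4\big)=60\cdot\frac{n^3+44n}{240}=\frac{n^3+44n}{4}$, as claimed.

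For the converse, suppose $h(d)$ is odd and the $\sigma$-sum equals $\frac{n^3+44n}{4}$, so that $\zeta_k(-1)=\frac{n^3+44n}{240}$ by Theorem \ref{thm2.1}. By (i), $d\neq 17$ gives $\mathfrak A\neq\mathfrak P$, whence $h(d)>1$ and so $h(d)\ge 3$; moreover $h(d)$ odd forces $\mathfrak A\neq\mathfrak A^{-1}$. Subtracting the principal contribution gives $\sum_{\mathfrak C\neq\mathfrak P}\zeta_k(-1,\mathfrak C)=\frac{n^3+44n}{240}-\frac{n^3+14n}{360}=\frac{n^3+104n}{720}=2\zeta_k(-1,\mathfrak A)$. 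Since $\mathfrak A$ and $\mathfrak A^{-1}$ are distinct non-principal classes already accounting for exactly $2\zeta_k(-1,\mathfrak A)$, the remaining classes satisfy $\sum_{\mathfrak C\neq\mathfrak P,\mathfrak A,\mathfrak A^{-1}}\zeta_k(-1,\mathfrak C)=0$. The key step, and the point I expect to be the main obstacle, is the strict positivity $\zeta_k(-1,\mathfrak C)>0$ for every ideal class, which I would establish from the explicit form of Lang's formula (Theorem \ref{thm2.2}) together with Lemmas \ref{2.1}, \ref{DS1} and \ref{DS2}; granting it, the vanishing sum admits no further terms, so $\{\mathfrak P,\mathfrak A,\mathfrak A^{-1}\}$ exhausts the class group and $h(d)=3$.
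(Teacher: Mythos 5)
Your proposal is correct and follows essentially the same route as the paper: partial zeta values from Theorems \ref{thm3.1} and \ref{thm3.2} for the principal class and the two mutually inverse classes above $2$, summed and compared with Zagier's formula, with positivity of partial zeta values ruling out $h(d)\geq 5$ in the converse. The only minor differences are that you make $\zeta_k(-1,\mathfrak{A}^{-1})=\zeta_k(-1,\mathfrak{A})$ explicit via the Galois automorphism and deduce $h(d)\geq 3$ directly from non-principality plus oddness, where the paper instead cites the class number $1$ and $2$ criteria of Byeon--Kim; the positivity $\zeta_k(-1,\mathfrak{C})>0$ that you flag as the main obstacle is simply asserted without proof in the paper.
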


\begin{proof}
Let us assume that the class group of $k=\mathbb{Q}(\sqrt{d})$ is $\mathfrak{C}(k)=\{\mathfrak{P}, \mathfrak{A}, \mathfrak{B}\}$ with principal ideals class $\mathfrak{P}$. Then by Theorem \ref{thm3.1}, we have  $$\zeta_k(-1, \mathfrak{P})=\dfrac{n^3+14n}{360}.$$
 If $\Big(2,(1\pm\sqrt{d})/2\Big) \in \mathfrak{A}^{-1}=\mathfrak{B}$, then by Theorem \ref{thm3.2}, we see that $\zeta_{k}(-1,\mathfrak{P})=\zeta_{k}(-1,\mathfrak{A})$ if and only if $d=17$. Thus $\Big(2,(1-\sqrt{d})/2\Big)$ and $\Big(2,(1+\sqrt{d})/2\Big)$ are non-principal ideals except $d=17$. 

Let $\Big(2,(1-\sqrt{d})/2\Big)\in \mathfrak{A}$ and $\Big(2,(1+\sqrt{d})/2\Big)\in \mathfrak{B}$. Then by Theorem \ref{thm3.2}, we obtain 
$$\zeta_k(-1, \mathfrak{A})=\zeta_k(-1, \mathfrak{B})=\dfrac{n^3+104n}{1440}.$$ 
As $\mathfrak{C}(k)=\{\mathfrak{P}, \mathfrak{A}, \mathfrak{B}\}$,  we obtain $$\zeta_{k}(-1)=\zeta_{k}(-1,\mathfrak{P})+\zeta_{k}(-1,\mathfrak{A})+\zeta_{k}(-1,\mathfrak{B})=\frac{n^3+44n}{240}.$$
We now apply Theorem \ref{thm2.1} to get 
$$\sum_{\substack{ |t|<\sqrt{d}\\ t^2\equiv d\pmod 4}}\sigma\left(\dfrac{d-t^2}{4}\right)=\dfrac{n^3+44n}{4}.$$

Converse part implies 
\begin{equation}\label{z1}
\zeta_{k}(-1)=\dfrac{n^3+44n}{240}.
\end{equation}
Then by \cite[Theorem 2.4]{BK1} and \cite[Theorem 2.7]{BK2}, we obtain $h(d)\geq3$. If $h(d)>3$, then there exist at least $5$ ideal classes in $k$ since $h(d)$ is odd. If $\mathfrak{C}$ and $\mathfrak{D}$ are another two ideal classes in $k$, then  
\begin{equation}\label{eq3.2}
\zeta_{k}(-1)\geq\zeta_{k}(-1,\mathfrak{P})+\zeta_{k}(-1,\mathfrak{A})+\zeta_{k}(-1,\mathfrak{B})+\zeta_{k}(-1,\mathfrak{C})+\zeta_{k}(-1,\mathfrak{D}),
\end{equation}
where the equality holds if $h(d)=5$. Without loss of generality, let us assume that  
$\Big(2,(1-\sqrt{d})/2\Big)\in \mathfrak{A}$ and $\Big(2,(1+\sqrt{d})/2\Big)\in \mathfrak{B}$. Then by Theorem \ref{thm3.2}, we obtain 
$$\zeta_k(-1, \mathfrak{A})=\zeta_k(-1, \mathfrak{B})=\dfrac{n^3+104n}{1440}.$$ 
Since for any ideal class $\mathfrak{Q}$, $\zeta_{k}(-1,\mathfrak{Q})>0$, thus by \eqref{eq3.2} we obtain 
$$\zeta_{k}(-1)>\zeta_{k}(-1,\mathfrak{P})+\zeta_{k}(-1,\mathfrak{A})+\zeta_{k}(-1,\mathfrak{B})=\frac{n^3+44n}{240},$$ which contradicts to \eqref{z1}. This completes the proof.
\end{proof}
We can prove the following result using a similar argument as in Theorem \ref{thm3.3}, and by using Lemma \ref{DS2}.
\begin{theorem}\label{thm3.4}
Let $n$ be a positive integer and $d=n^2+1\equiv 2\pmod 4$  be  square-free. Let $p$ be a prime divisor of  $n$. If $h(d)=3$, then
\begin{equation*}
\sum_{\substack{ |t|<\sqrt{d}\\ t^2\equiv d\pmod 4}}\sigma\left(\frac{d-t^2}{4}\right)=\frac{2n^3+13n}{3}+ \frac{8n^3+2n(p^4+10p^2)}{3p^2}.
\end{equation*}
The converse holds if $h(d)$ is odd with one exception, viz. $d = 2$.
\end{theorem}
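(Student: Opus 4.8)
The plan is to follow the proof of Theorem~\ref{thm3.3} step for step, replacing the \emph{split} prime $2$ (which now ramifies, since $d\equiv 2\pmod 4$ forces $n$ odd and $\mathcal{O}_k=\mathbb{Z}[\sqrt d]$) by a prime dividing $n$. Throughout, the fundamental unit is $\varepsilon=n+\sqrt d$ with $N(\varepsilon)=-1$, and when $h(d)=3$ the class group is cyclic, $\mathfrak{C}(k)=\{\mathfrak{P},\mathfrak{A},\mathfrak{A}^{-1}\}$, with $\zeta_k(-1,\mathfrak{A})=\zeta_k(-1,\mathfrak{A}^{-1})$ by conjugation. Thus everything reduces to two closed forms, for $\zeta_k(-1,\mathfrak{P})$ and $\zeta_k(-1,\mathfrak{A})$, after which $\zeta_k(-1)=\zeta_k(-1,\mathfrak{P})+2\zeta_k(-1,\mathfrak{A})$ is fed into Theorem~\ref{thm2.1}.

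First I would evaluate $\zeta_k(-1,\mathfrak{P})$ by applying Lang's formula (Theorem~\ref{thm2.2}) to $\mathfrak{a}=\mathcal{O}_k$ with the integral basis $\{r_1,r_2\}=\{\sqrt d,1\}$, so $\delta(\mathfrak{a})=2\sqrt d$ and $N(\mathfrak{a})=1$. Kim's Lemma~\ref{2.1} then returns matrix entries $a=n$ and (writing $d_M$ for the lower-right entry) $d_M=n$, with modulus $c=1$; the Dedekind sums occurring are $S^3(n,1)$ and $S^2(n,1)$, read off from Lemma~\ref{DS1} specialised to $m=1$. This is the delicate point of the argument and I return to it below.

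Next I would produce the non-trivial class from a prime divisor $p$ of $n$. Because $p\mid n$ we have $d=n^2+1\equiv 1\pmod p$, so $x^2\equiv d\pmod p$ is solvable and $p$ splits; take $\mathfrak{p}=(p,\sqrt d-1)$ with basis $\{\sqrt d-1,p\}$, giving $\delta(\mathfrak{a})=2p\sqrt d$ and $N(\mathfrak{a})=p$. Kim's Lemma~\ref{2.1} now yields $a=n-1$, $d_M=n+1$, $c=p$ (consistently $b=n^2/p\in\mathbb{Z}$ and $\det M=ad_M-bc=-1=N(\varepsilon)$), and the key simplification is that $p\mid n$ forces $a\equiv-1$ and $d_M\equiv 1\pmod p$, so that $S^i(a,p)=S^i(-1,p)$ and $S^i(d_M,p)=S^i(1,p)$; these are supplied by Lemmas~\ref{DS1} and~\ref{DS2}, the even-modulus entries of Lemma~\ref{DS2} being what the companion computations for the remaining R--D families demand. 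Substituting into Theorem~\ref{thm2.2}, the $S^3$-contributions cancel in pairs and the $S^2$-contributions combine, leaving $\zeta_k(-1,\mathfrak{A})$ as a single rational function of $n$ and $p$. Summing and invoking Theorem~\ref{thm2.1} with discriminant $D=4d$ (so that the outer sum runs over even $t$, i.e.\ over $\sigma(d-s^2)$ for $|s|<\sqrt d$) gives the displayed identity.

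For the converse I would assume the identity together with $h(d)$ odd, use \cite[Theorem~2.4]{BK1} and \cite[Theorem~2.7]{BK2} to force $h(d)\geq 3$, and then exclude $h(d)\geq 5$ exactly as in Theorem~\ref{thm3.3}: oddness would give at least five classes, whence positivity of every partial value $\zeta_k(-1,\mathfrak{Q})>0$ yields $\zeta_k(-1)>\zeta_k(-1,\mathfrak{P})+2\zeta_k(-1,\mathfrak{A})$, contradicting the identity. The exceptional case is $d=2$, where $n=1$ has no prime divisor $p$ and the construction of $\mathfrak{A}$ is unavailable. The main obstacle I anticipate is precisely the principal computation: with $\mathfrak{a}=\mathcal{O}_k$ the modulus collapses to $c=1$, so I must justify applying Lemma~\ref{DS1} at $m=1$ and cross-check the resulting value against a non-degenerate representative of $\mathfrak{P}$, for instance the ramified prime $(2,\sqrt d)$, which is principal precisely because $h(d)$ is odd and which gives $c=2$. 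Carrying out that cross-check also exposes the genuine subtlety of the statement: for $n>1$ the prime $2$ and every odd prime dividing $(n^2+1)/2$ ramify, so genus theory forces $2\mid h(d)$, and the hypothesis $h(d)=3$ is correspondingly stringent — a point that must be reconciled with the clean form of the asserted identity.
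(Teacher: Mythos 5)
Your strategy is the one the paper intends (the paper offers no proof beyond pointing at Theorem \ref{thm3.3} and Lemma \ref{DS2}), and your non-principal computation is correct: for $\mathfrak{p}=(p,\sqrt{d}-1)$ with basis $\{\sqrt{d}-1,\,p\}$ one gets $a=n-1$, $d=n+1$, $c=p$, $b=n^2/p$, the $S^3$-terms cancel, and Lang's formula gives $\zeta_k(-1,\mathfrak{A})=\left(8n^3+2n(p^4+10p^2)\right)/(360p^2)$, exactly $1/120$ of the second displayed term. The genuine gap is the principal class, which you leave unresolved, and neither of your proposed repairs works. The basis $\{\sqrt{d},1\}$ gives the degenerate modulus $c=1$, and the cross-check ideal $(2,\sqrt{d})$ is \emph{not} a representative of $\mathfrak{P}$ for $n>1$: it is ramified, its class is nontrivial $2$-torsion (e.g.\ $x^2-10y^2=\pm2$ is insoluble), and Lang's formula applied to it returns the value $(n^3+14n)/180$ of a non-principal class. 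A clean representative is the principal ideal $(n+1+\sqrt{d})$ of norm $2n$ with basis $\{n+1+\sqrt{d},\,2n\}$; Lemma \ref{2.1} gives $a=2n+1\equiv 1$, $d=-1$, $c=2n$, and Lemma \ref{DS1} with $m=2n$ yields $\zeta_k(-1,\mathfrak{P})=(4n^3+11n)/180$. (Check at $d=10$: $\zeta_k(-1)=7/6$ splits as $141/180$ for $\mathfrak{P}$ and $69/180$ for the nontrivial class, consistent with both closed forms.) The assembled first term is therefore $(4n^3+11n)/3$, which agrees with the printed $(2n^3+13n)/3$ only at $n=1$; your outline cannot terminate in the stated identity, and the statement's first term appears to be in error.

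Your closing genus-theory observation should be the centrepiece, not an afterthought. For square-free $d=n^2+1\equiv 2\pmod 4$ with $n>1$ the discriminant $4d$ has at least two prime discriminant factors and $N(\varepsilon)=-1$, so $2\mid h(d)$; hence $h(d)=3$ never occurs and ``$h(d)$ odd'' forces $d=2$. Both directions of the theorem are thus vacuous, which is the only reason the defective first term yields no counterexample --- but it also means the argument is never exercised on an actual field, and a complete write-up must either record this vacuity or correct the formula. You would also need, as in the proof of Theorem \ref{thm3.3}, the small step that $\mathfrak{p}$ is never principal (equating the two partial zeta values forces $p=2n$, impossible for $p\mid n$), so that no exceptional $d$ arises from that side of the argument.
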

We now consider a family of real quadratic fields of wide R-D type, and deduce the similar criteria for class number $3$. The proof of the following result goes along the similar lines of Theorem \ref{thm3.3}.
\begin{theorem}\label{thm3.5}
Let $d=n^2+r \equiv 1\pmod 8$ be a square-free integer with $|r|\neq 1,4$.  If $h(d)=3$, then 
$$\sum_{\substack{ |t|<\sqrt{d}\\ t^2\equiv d\pmod 4}}\sigma\left(\frac{d-t^2}{4}\right)=\frac{2n^3(r^2+1)+n(3r^3+170r^2+3r)}{8r^2}.
$$
The converse holds if $h(d)$ is odd with one exception, viz. $d=33$.
\end{theorem}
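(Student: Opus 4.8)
The plan is to mirror the proof of Theorem~\ref{thm3.3}, but feeding in the $|r|\neq 1,4$ branches of Theorems~\ref{thm3.1} and~\ref{thm3.2} in place of the $|r|=1$ branches used there. Since $d\equiv 1\pmod 8$, the prime $2$ splits as $(2)=\mathfrak{p}\mathfrak{p}'$ with $\mathfrak{p}=(2,(1+\sqrt d)/2)$ and $\mathfrak{p}'=(2,(1-\sqrt d)/2)$, and these ideals lie in mutually inverse classes. Assuming $h(d)=3$, write the class group as $\mathfrak{C}(k)=\{\mathfrak P,\mathfrak A,\mathfrak B\}$ with $\mathfrak P$ principal. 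First I would read off from Theorem~\ref{thm3.1} that $\zeta_k(-1,\mathfrak P)=\frac{2n^3(r^2+1)+n(3r^3+50r^2+3r)}{720r^2}$.

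Next I would pin down the classes of $\mathfrak p,\mathfrak p'$. Comparing $\zeta_k(-1,\mathfrak P)$ with the value $\zeta_k(-1,\mathfrak A)=\frac{2n^3(r^2+1)+n(3r^3+410r^2+3r)}{2880r^2}$ supplied by Theorem~\ref{thm3.2}, the equation $\zeta_k(-1,\mathfrak P)=\zeta_k(-1,\mathfrak A)$ reduces, after clearing denominators and cancelling the common factor $3n$, to the Diophantine relation $2n^2(r^2+1)=-r(3r^2-70r+3)$, subject to the Degert constraints $r\mid 4n$, $-n<r\le n$, $d=n^2+r\equiv 1\pmod 8$ and $|r|\neq 1,4$. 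I expect the sole admissible solution to be $(n,r)=(6,-3)$, which gives $d=33$; for all other $d$ the ideals $\mathfrak p,\mathfrak p'$ are non-principal, so $\{\mathfrak p,\mathfrak p'\}=\{\mathfrak A,\mathfrak B\}$ and Theorem~\ref{thm3.2}, applied to both generators, yields $\zeta_k(-1,\mathfrak A)=\zeta_k(-1,\mathfrak B)$.

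Summing over the three classes then gives $\zeta_k(-1)=\zeta_k(-1,\mathfrak P)+2\zeta_k(-1,\mathfrak A)=\frac{2n^3(r^2+1)+n(3r^3+170r^2+3r)}{480r^2}$, and multiplying by $60$ as prescribed by Theorem~\ref{thm2.1} produces the stated divisor-sum identity. For the converse I would reuse the monotonicity argument of Theorem~\ref{thm3.3}: the divisor-sum hypothesis fixes $\zeta_k(-1)$ at the value above, the lower bound $h(d)\ge 3$ comes from \cite[Theorem 2.4]{BK1} together with \cite[Theorem 2.7]{BK2}, and if $h(d)$ were odd and strictly larger than $3$ there would be at least five ideal classes; the strict positivity $\zeta_k(-1,\mathfrak Q)>0$ for every class $\mathfrak Q$ would then force $\zeta_k(-1)>\zeta_k(-1,\mathfrak P)+2\zeta_k(-1,\mathfrak A)$, contradicting the computed value.

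The main obstacle is the Diophantine step isolating $d=33$: one must verify that $2n^2(r^2+1)=-r(3r^2-70r+3)$ admits no integer solution other than $(n,r)=(6,-3)$ inside the allowed range. Because $r\mid 4n$ keeps $|r|$ comparable in size to $n$ while the left-hand side grows like $n^2r^2$, only finitely many pairs can satisfy the identity, and these can be enumerated directly; confirming that every candidate beyond $d=33$ is ruled out by the congruence $d\equiv 1\pmod 8$ or by the sign of the right-hand side is the only genuinely delicate bookkeeping in the argument.
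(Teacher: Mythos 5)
Your proposal is correct and is precisely the argument the paper intends: the paper gives no separate proof of Theorem~\ref{thm3.5}, stating only that it ``goes along the similar lines of Theorem~\ref{thm3.3},'' and your substitution of the $|r|\neq 1,4$ branches of Theorems~\ref{thm3.1} and~\ref{thm3.2} reproduces exactly the stated sum $\zeta_k(-1)=\frac{2n^3(r^2+1)+n(3r^3+170r^2+3r)}{480r^2}$ and hence the divisor-sum identity. The Diophantine step you flag does close as you expect: writing $r=-s$ the equation forces $2s^3-3s^2-68s-3<0$, so $s\le 6$, and checking $s\in\{2,3,5,6\}$ (and the analogous bound $n\le 5$ for $r>0$, which yields nothing admissible) leaves only $(n,r)=(6,-3)$, i.e.\ $d=33$.
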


\section{Real quadratic fields with class numbers $2$ and $3$}
In this section, we obtain class number $2$ and $3$ criteria for the real quadratic fields $\mathbb{Q}(\sqrt{n^2+r})$ when $r\in\{1,4\}$ and $n^2+r\equiv 5\pmod 8$. These two families were not considered in \cite{BK2} to obtain the class number $2$ criteria. 

Let $d=n^2+4 \equiv 5\pmod 8$, and let $p$ be an odd prime divisor of  $n$. Then
$p$ splits in $k=\mathbb{Q}(\sqrt{d})$, that is
$
(p)=\mathfrak{p}\mathfrak{p}'$, where $\mathfrak{p}=\left(p,(p+2+ \sqrt{d})/2\right)$ and $\mathfrak{p}'=\left(p,(p+2-\sqrt{d})/2\right).
$
Simillarly if $d=n^2+1 \equiv 5\pmod 8$, and $q$ is an odd prime divisor of $n$. Then $q$ also splits in $k=\mathbb{Q}(\sqrt{d})$, that is
$(q)=\mathfrak{q}\mathfrak{q}'$, where $\mathfrak{q}=\left(q,(1+\sqrt{d})/2\right)$ and $\mathfrak{q}'=\left(q,(1-\sqrt{d})/2\right).$


We can prove the following result using the similar argument of the proof of Theorem \ref{thm3.2}.

\begin{theorem}\label{thm4.1}
Let $d=n^2+r\equiv 5\pmod 8$ be square-free with $r=1, 4$, and let $k=\mathbb{Q}{(\sqrt{d})}$. Let $p$ be an odd prime divisor of $n$. If  $\mathfrak{A}$  is the ideal class containing one of $\mathfrak{p}$,  $\mathfrak{p}'$, $\mathfrak{q}$ and $\mathfrak{q}'$ (as defined above), then
\begin{equation*}
\zeta_k(-1, \mathfrak{A})=\begin{cases}
(n^3+n(p^4+10p^2))/(360p^2),  &{\rm ~if~} r=4,\\
(n^3+n(4q^4+10q^2))/(360q^2),  &{\rm ~if~} r=1.
\end{cases}
\end{equation*}
\end{theorem}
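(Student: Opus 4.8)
The plan is to follow the proof of Theorem~\ref{thm3.2} step for step: pick a convenient integral ideal $\mathfrak{a}$ in $\mathfrak{A}^{-1}$, read off an integral basis $\{r_1,r_2\}$, compute $\delta(\mathfrak{a})$ and the matrix $M$ via Lemma~\ref{2.1}, evaluate the resulting Dedekind sums, and substitute into Lang's formula (Theorem~\ref{thm2.2}). Since $\mathfrak{p}\mathfrak{p}'=(p)$ and $\mathfrak{q}\mathfrak{q}'=(q)$ are principal, the four listed ideals occur in conjugate--inverse pairs; because Galois conjugation fixes the partial zeta value one has $\zeta_k(-1,\mathfrak{A})=\zeta_k(-1,\mathfrak{A}^{-1})$, so all four choices yield the same number and it suffices to evaluate $\zeta_k(-1,\mathfrak{A})$ with $\mathfrak{A}^{-1}$ represented by $\mathfrak{p}$ in the case $r=4$ and by $\mathfrak{q}$ in the case $r=1$.

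For $r=4$ I would take $\mathfrak{a}=\mathfrak{p}=\left(p,(p+2+\sqrt{d})/2\right)$ with integral basis $r_1=(p+2+\sqrt{d})/2$, $r_2=p$, so that $\delta(\mathfrak{a})=p\sqrt{d}$; by Degert's formula $\varepsilon=(n+\sqrt{d})/2$ with $N(\varepsilon)=-1$. Lemma~\ref{2.1} then gives $a=(n+p+2)/2$, $c=p$ and $d=(n-p-2)/2$, whence $a+d=n=\mathrm{Tr}(\varepsilon)$ and $ad-bc=N(\varepsilon)=-1$ as a check. For $r=1$ I would instead take $\mathfrak{a}=\mathfrak{q}=\left(q,(1+\sqrt{d})/2\right)$ with $r_1=(1+\sqrt{d})/2$, $r_2=q$, so $\delta(\mathfrak{a})=q\sqrt{d}$ and, again by Degert's formula, $\varepsilon=n+\sqrt{d}$ with $N(\varepsilon)=-1$; here Lemma~\ref{2.1} yields $a=n+1$, $c=2q$ and $d=n-1$, so that $a+d=2n=\mathrm{Tr}(\varepsilon)$.

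The decisive arithmetic step is to reduce the arguments of the generalized Dedekind sums modulo $c$. When $r=4$, since $p$ is odd and $p\mid n$ one computes $2a=n+p+2\equiv 2$ and $2d=n-p-2\equiv -2\pmod p$, hence $a\equiv 1$ and $d\equiv -1\pmod p$, so $S^i(a,c)=S^i(1,p)$ and $S^i(d,c)=S^i(-1,p)$. When $r=1$, the hypothesis $d=n^2+1\equiv 5\pmod 8$ forces $n\equiv 2\pmod 4$, so $n$ is even; combined with $q\mid n$ for the odd prime $q$ this upgrades to $2q\mid n$, whence $a=n+1\equiv 1$ and $d=n-1\equiv -1\pmod{2q}$, giving $S^i(a,c)=S^i(1,2q)$ and $S^i(d,c)=S^i(-1,2q)$. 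In both cases the arguments are $\pm 1$, so only Lemma~\ref{DS1} is required (Lemma~\ref{DS2} is not needed here); the observation that $c=2q$ rather than $q$ in the case $r=1$ is precisely what produces the coefficient $4q^4$ in the final formula.

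It then remains to substitute into Theorem~\ref{thm2.2}. Using $S^3(1,m)+S^3(-1,m)=0$ the two $S^3$-terms cancel, while $S^2(1,m)=S^2(-1,m)$ lets the two $S^2$-terms combine into $(a+d)$ times a common factor; together with $(a+d)^3$ and $-6(a+d)N(\varepsilon)$ (recalling $N(\varepsilon)=-1$) the bracket collapses to $n^3+n(p^4+10p^2)$ for $r=4$ and to $8\bigl(n^3+n(4q^4+10q^2)\bigr)$ for $r=1$. Multiplying by the prefactor $\mathrm{sgn}\,\delta(\mathfrak{a})\,r_2r_2'/(360\,N(\mathfrak{a})\,c^3)$, which equals $1/(360p^2)$ and $1/(2880q^2)$ respectively, gives the asserted values. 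The only genuine obstacle is the bookkeeping in the second and third steps, namely pinning down the correct integral basis and the exact value of $c$, and then verifying the congruences $a\equiv 1$, $d\equiv -1\pmod c$ (in particular the passage from $q\mid n$ to $2q\mid n$); once these are in place, the Dedekind-sum cancellation is identical to that in Theorem~\ref{thm3.2}.
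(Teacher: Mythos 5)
Your proposal is correct and follows exactly the route the paper intends: the paper gives no separate proof of Theorem \ref{thm4.1} beyond the remark that it follows the argument of Theorem \ref{thm3.2}, and your computation (integral basis, $\delta(\mathfrak{a})$, the matrix entries $a=(n+p+2)/2$, $c=p$ for $r=4$ and $a=n+1$, $c=2q$ for $r=1$, the reduction of the Dedekind-sum arguments to $\pm 1$, and the final substitution into Lang's formula) carries that argument out in full and yields the stated values. The two points you flag as needing care — the passage from $q\mid n$ to $2q\mid n$ when $n^2+1\equiv 5\pmod 8$, and the fact that $c=2q$ rather than $q$ produces the $4q^4$ term — are handled correctly.
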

Let $\mathfrak{P}$ be the ideal class of principal ideals in $k$. Then 
\begin{equation*}
\zeta_k(-1, \mathfrak{P})=\begin{cases}
(n^3+11n)/360,  &{\rm ~if~} r=4,\\
(n^3+14n)/360,  &{\rm ~if~} r=1.
\end{cases}
\end{equation*}
Thus $h(d)>1$ if $\zeta_k(-1, \mathfrak{P})\ne\zeta_k(-1, \mathfrak{A})$. On the other hand, $\zeta_k(-1, \mathfrak{P})=\zeta_k(-1, \mathfrak{A})$ implies $$n=\begin{cases}
p,  &{\rm ~if~} r=4,\\
2q,  &{\rm ~if~} r=1.
\end{cases}$$  
\begin{remark}\label{rk4.1}
Let $d$ be as in Theorem \ref{thm4.1}. If $h(d)=1$, then $d$ must be of the form either $p^2+4$ or $4p^2+1$.
\end{remark}
This remark does not provide any information about the conjectures (C) and (Y).
One can prove the following result using a similar argument of the proof of Theorem \ref{thm3.3}.
\begin{theorem}\label{thm4.2}
Let $k$ and $p$ be as in Theorem \ref{thm4.1}. If $h(d)=3$ then
\begin{equation*}
\sum_{\substack{ |t|<\sqrt{d}\\ t^2\equiv d\pmod 4}}\sigma\left(\frac{d-t^2}{4}\right)=\begin{cases}
\frac{n^3+11n}{6}+ \frac{n^3+n(p^4+10p^2)}{3p^2},  &{\rm ~if~} r=4 {\rm ~and~} n\neq p ,\\
\frac{n^3+14n}{6}+\frac{n^3+n(4p^4+10p^2)}{3p^2},  &{\rm ~if~} r=1 {\rm ~and~} n\neq 2p .
\end{cases}
\end{equation*}
The converse holds if $h(d)$ is odd.
\end{theorem}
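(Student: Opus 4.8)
The plan is to mirror the proof of Theorem \ref{thm3.3}, replacing the split prime $2$ (and its ideals $(2,(1\pm\sqrt d)/2)$) by the split odd prime $p\mid n$ with its factors $\mathfrak{p},\mathfrak{p}'$, and replacing Theorem \ref{thm3.2} by Theorem \ref{thm4.1}. The engine is again the identity $\zeta_k(-1)=\sum_{\mathfrak{C}}\zeta_k(-1,\mathfrak{C})$ together with Zagier's formula in Theorem \ref{thm2.1}.

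For the forward direction I would assume $h(d)=3$, so the class group is $\mathfrak{C}(k)=\{\mathfrak{P},\mathfrak{A},\mathfrak{B}\}$, cyclic of order $3$, whose non-trivial elements satisfy $\mathfrak{B}=\mathfrak{A}^{-1}\neq\mathfrak{A}$. The relation $(p)=\mathfrak{p}\mathfrak{p}'$ gives $[\mathfrak{p}']=[\mathfrak{p}]^{-1}$, so $\mathfrak{p}$ and $\mathfrak{p}'$ lie in mutually inverse classes. Under the hypothesis $n\neq p$ (for $r=4$) or $n\neq 2p$ (for $r=1$), the discussion following Theorem \ref{thm4.1} shows $\zeta_k(-1,\mathfrak{P})\neq\zeta_k(-1,\mathfrak{A})$, whence $\mathfrak{p}$ is non-principal and $[\mathfrak{p}],[\mathfrak{p}']$ are precisely the two non-principal classes $\mathfrak{A},\mathfrak{B}$. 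Theorem \ref{thm4.1} applies to each (both classes contain an ideal of the listed type), so $\zeta_k(-1,\mathfrak{A})=\zeta_k(-1,\mathfrak{B})$ equals the stated value, while the principal-class value is recorded just before the statement. Summing over the three classes gives
$$\zeta_k(-1)=\zeta_k(-1,\mathfrak{P})+2\,\zeta_k(-1,\mathfrak{A}),$$
and multiplying by $60$, as prescribed by Theorem \ref{thm2.1}, yields the claimed identity for the relevant sum of $\sigma$-values in each case $r=4$ and $r=1$ after simplification.

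For the converse I would suppose the displayed identity holds and that $h(d)$ is odd; this is equivalent to $\zeta_k(-1)=\zeta_k(-1,\mathfrak{P})+2\,\zeta_k(-1,\mathfrak{A})$ with the value from Theorem \ref{thm4.1}. As before, $n\neq p$ (resp. $n\neq 2p$) forces $\mathfrak{p}$ to be non-principal, so $h(d)\geq 2$, and oddness upgrades this to $h(d)\geq 3$. If $h(d)>3$, then $h(d)\geq 5$ and there are at least two further classes $\mathfrak{C},\mathfrak{D}$ beyond the three distinct classes $\mathfrak{P},\mathfrak{A},\mathfrak{B}$ (here $\mathfrak{A}\neq\mathfrak{B}$, since an odd-order class group has no element of order $2$). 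Using the positivity $\zeta_k(-1,\mathfrak{Q})>0$ for every ideal class $\mathfrak{Q}$ and summing over all classes gives
$$\zeta_k(-1)>\zeta_k(-1,\mathfrak{P})+\zeta_k(-1,\mathfrak{A})+\zeta_k(-1,\mathfrak{B}),$$
which contradicts the assumed value. Hence $h(d)=3$.

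The bulk of the work is the routine simplification of $60\big(\zeta_k(-1,\mathfrak{P})+2\,\zeta_k(-1,\mathfrak{A})\big)$ into the two closed forms, done separately for $r=4$ and $r=1$ from the explicit values in Theorem \ref{thm4.1}. The only point requiring genuine care is the non-principality of $\mathfrak{p}$, which rests entirely on the strict inequalities $n\neq p$ and $n\neq 2p$; this is exactly why those inequalities appear as hypotheses, and it also explains why, unlike Theorems \ref{thm3.3}--\ref{thm3.5}, no exceptional radicand intervenes in the converse here.
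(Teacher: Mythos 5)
Your proposal is correct and follows exactly the route the paper intends: the paper gives no separate proof of Theorem \ref{thm4.2}, stating only that it follows by the argument of Theorem \ref{thm3.3}, and your adaptation (replacing the split prime $2$ by the split odd prime $p\mid n$, Theorem \ref{thm3.2} by Theorem \ref{thm4.1}, and noting that the hypotheses $n\neq p$, $n\neq 2p$ replace the exceptional radicand) is precisely that argument. The arithmetic $60\bigl(\zeta_k(-1,\mathfrak{P})+2\zeta_k(-1,\mathfrak{A})\bigr)$ checks out in both cases.
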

Along the same line, we obtain the following criteria for class number $2$.
\begin{theorem}\label{thm4.3}
Let $k$ and $p$ be as in Theorem \ref{thm4.1}. Then $h(d)=2$ if and only if
\begin{equation*}
\sum_{\substack{ |t|<\sqrt{d}\\ t^2\equiv d\pmod 4}}\sigma\left(\frac{d-t^2}{4}\right)=\begin{cases}
\frac{n^3+11n}{6}+ \frac{n^3+n(p^4+10p^2)}{6p^2},  &{\rm ~if~} r=4 {\rm ~and~} n\neq p ,\\
\frac{n^3+14n}{6}+\frac{n^3+n(4p^4+10p^2)}{6p^2},  &{\rm ~if~} r=1 {\rm ~and~} n\neq 2p .
\end{cases}
\end{equation*}
\end{theorem}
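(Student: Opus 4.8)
The plan is to run exactly the comparison that drives all of Section~\ref{}—evaluate $\zeta_k(-1)$ in two ways, via Zagier's divisor sum (Theorem~\ref{thm2.1}) and via Lang's decomposition $\zeta_k(-1)=\sum_{\mathfrak C}\zeta_k(-1,\mathfrak C)$ over the ideal classes of $k$—and then to read off $h(d)=2$ as the equality case of a positivity inequality. First I would observe that the right-hand sides in the statement are simply $60\big(\zeta_k(-1,\mathfrak P)+\zeta_k(-1,\mathfrak A)\big)$: adding the principal value $\zeta_k(-1,\mathfrak P)$ displayed just after Theorem~\ref{thm4.1} to the value $\zeta_k(-1,\mathfrak A)$ from Theorem~\ref{thm4.1}, and clearing the factor $1/60$ coming from Theorem~\ref{thm2.1}, reproduces $\tfrac{n^3+11n}{6}+\tfrac{n^3+n(p^4+10p^2)}{6p^2}$ when $r=4$ and $\tfrac{n^3+14n}{6}+\tfrac{n^3+n(4p^4+10p^2)}{6p^2}$ when $r=1$. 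Thus the asserted identity is precisely $\tfrac1{60}\sum_t\sigma\!\big(\tfrac{d-t^2}{4}\big)=\zeta_k(-1,\mathfrak P)+\zeta_k(-1,\mathfrak A)$, and the whole theorem reduces to characterizing when that single equality holds.

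For the forward implication I would assume $h(d)=2$, so the class group is $\{\mathfrak P,\mathfrak A_0\}$ with a unique non-principal class $\mathfrak A_0$ (necessarily of order two). The hypotheses $n\neq p$ (for $r=4$) and $n\neq 2p$ (for $r=1$) guarantee, through the comparison recorded immediately after Theorem~\ref{thm4.1}, that $\zeta_k(-1,\mathfrak P)\neq\zeta_k(-1,\mathfrak A)$; hence the prime ideal $\mathfrak p$ (resp.\ $\mathfrak q$) above $p$ is non-principal, so its class coincides with the lone non-principal class $\mathfrak A_0=\mathfrak A$. Summing $\zeta_k(-1,\mathfrak P)+\zeta_k(-1,\mathfrak A)$ over these two classes and applying Theorem~\ref{thm2.1} then yields the stated divisor-sum identity.

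For the converse I would lean on positivity, namely $\zeta_k(-1,\mathfrak C)>0$ for every ideal class $\mathfrak C$, exactly as used in the proof of Theorem~\ref{thm3.3}. Since $\mathfrak P$ and $\mathfrak A$ are distinct classes—the latter non-principal by the same $n\neq p$ (resp.\ $n\neq 2p$) argument—Lang's decomposition always gives $\zeta_k(-1)\geq\zeta_k(-1,\mathfrak P)+\zeta_k(-1,\mathfrak A)$, with equality if and only if these two classes exhaust $\mathfrak C(k)$, that is $h(d)=2$. The assumed identity, reinterpreted through Theorem~\ref{thm2.1}, forces this equality, and therefore $h(d)=2$.

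The step I expect to require the most care is verifying that the excluded values $n=p$ and $n=2p$ are exactly the ones on which $\mathfrak p$ (resp.\ $\mathfrak q$) becomes principal, so that under the stated hypotheses $\mathfrak A$ is genuinely the non-principal generator rather than $\mathfrak P$ in disguise. Beyond this bookkeeping the argument is actually cleaner than its class-number-three counterpart (Theorem~\ref{thm4.2}) and needs \emph{no} parity hypothesis: for $h(d)=2$ the surplus in the positivity bound is controlled by the single class $\mathfrak A$, so no companion class $\mathfrak A^{-1}$ and no order-two ambiguity intervenes, and the equality case characterizes class number two outright.
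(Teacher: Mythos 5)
Your proposal is correct and is exactly the argument the paper intends: Theorem \ref{thm4.3} is stated with only the remark that it follows ``along the same line'' as Theorems \ref{thm3.3} and \ref{thm4.2}, i.e.\ by equating Zagier's divisor sum with the sum of the two partial zeta values $\zeta_k(-1,\mathfrak{P})+\zeta_k(-1,\mathfrak{A})$ and invoking positivity of partial zeta values for the converse. You also correctly identify why the $n\neq p$ (resp.\ $n\neq 2p$) hypothesis forces $\mathfrak{p}$ (resp.\ $\mathfrak{q}$) to be non-principal and why, unlike the class-number-three case, no parity assumption on $h(d)$ is needed for the biconditional.
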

Note that Byeon and Lee \cite{BL08} proved that if $d=n^2+1$ is even square-free integer with $d>362$, then $h(d)\geq 3$. In particular, they proved that $d=10, 26, 122, 362$ are the only values of $d$ for which $h(d)=2$.
\section{Computations and concluding remarks}
In this section, we give some numerical examples which verify our results in \S3 and \S4. We use SAGE version $8.4$ (2018-10-17) for all the computations in this paper. We have computed $h(d)$ and verified Theorem \ref{thm3.3} for $d\leq 10^{10}$ when $d$ is composite,  and $d\leq 10^{13}$ when $d$ is prime. We have obtained only one $d$, viz. $d=257$, with $h(d)=3$ under the assumptions of this theorem. 

In case of Theorem \ref{thm3.5}, we have computed $h(d)$ for $n\leq 10^{4}$ and $|r|\leq 4\times 10^{4}$. Out of these, we have obtained only $2$ fields with $h(d)=3$. These values are listed in Table \ref{t0}, and Theorem \ref{thm3.5} is verified for all these values. 
\begin{table}[ht]
 \centering
\begin{tabular}{  c  c  c  c c c c c } 
 \hline
 $n$ & $r$ & $d$ & $h(d)$\\
\hline
18&$-3$&321&3\\
\hline
22&$-11$&473&3\\
\hline
\end{tabular}\vspace{2mm}
\caption{Numerical examples of Theorem \ref{thm3.5}. }\label{t0}
\end{table}

We have computed $h(d)$ for $d\leq 10^8$ satisfying the conditions in Theorem \ref{thm4.2}. We have listed in Table \ref{t1} the only those values  which correspond to $h(d)=3$. We have verified the equation in Theorem \ref{thm4.2} by computation for the values listed in Table \ref{t1}. There are only $5$ real quadratic fields of the form $\mathbb{Q}(\sqrt{n^2+r})$ satisfying $n^2+r\equiv 5\pmod 8$ with $r=1,4$ and $n^2+r\leq 10^8$. Out of these fields, $1$ field is of the form $\mathbb{Q}(\sqrt{n^2+1})$ and $3$ fields are of the other form.
\begin{table}[ht]
 \centering
\begin{tabular}{  c  c  c  c c c c c } 
 \hline
 $n$ & $r$ & $d$ & $p$ & $h(d)$\\
\hline
54&1&2917&3&3\\
\hline
15&4&229&3&3\\
\hline
15&4&229&5&3\\
\hline
27&4&733&3&3\\
\hline
35&4&1229&5&3\\
\hline
35&4&1229&7&3\\
\hline
\end{tabular}\vspace{2mm}
\caption{Numerical examples of Theorem \ref{thm4.2}. }\label{t1}
\end{table}

Similary, we have computed $h(d)$ for $d\leq 10^{10}$ satisfying the conditions in Theorem \ref{thm4.3}. There are only $3$ real quadratic fields of the form $\mathbb{Q}(\sqrt{n^2+r})$ satisfying $n^2+r\equiv 5\pmod 8$ with $r=1,4, ~ h(d)=2$ and $n^2+r\leq 10^{10}$. Out of these fields, no field is of the form $\mathbb{Q}(\sqrt{n^2+1})$ and $2$ fields are of the other form. We have listed these values in Table \ref{t2}, and verified the equation in Theorem \ref{thm4.2} for them.
\begin{table}[ht]
 \centering
\begin{tabular}{  c  c  c  c c c c c } 
 \hline
 $n$ & $r$ & $d$ & $p$ & $h(d)$\\
\hline
9&4&85&3&2\\
\hline
25&4&629&5&2\\
\hline
\end{tabular}\vspace{2mm}
\caption{Numerical examples of Theorem \ref{thm4.3}. }\label{t2}
\end{table}
\section*{Acknowledgements} 
\noindent The authors are indebted to the referee for going through the manuscript very carefully and suggesting many changes which has helped improving the manuscript. The authors are thankful to Mr. Rajiv Dixit for his kind help in computations. A. Hoque is supported by the SERB-NPDF (PDF/2017/001958), Govt. of India. M. Mishra is partially supported by Infosys grant.

\end{document}